\newtheorem{defn}{Definition}
\newtheorem{thm}[defn]{Theorem}
\newtheorem{rmk}[defn]{Remark}
\newtheorem{lemma}[defn]{Lemma}
\newtheorem{prop}[defn]{Proposition}
\newtheorem{cor}[defn]{Corollary}
\numberwithin{defn}{section}
\newcommand{\isom}{\cong}
\newcommand{\del}{\partial}
\newcommand{\less}{\backslash}
\newcommand{\Z}{\mathbb{Z}}
\newcommand{\R}{\mathbb{R}}
\newcommand{\C}{\mathbb{C}}
\newcommand{\Pb}{\mathbb{P}}
\newcommand{\F}{\mathcal{F}}
\newcommand{\T}{\mathcal{T}}
\newcommand{\M}{\mathcal{M}_{1,1}}
\newcommand{\N}{\mathbb{N}}
\newcommand{\D}{\mathcal{D}}
\newcommand{\A}{\mathcal{A}}
\newcommand{\E}{\mathcal{E}}
\newcommand{\h}{\mathfrak{h}}
\newcommand{\Br}{\mathrm{Br}}
\newcommand{\EG}{\mathrm{EG}}
\newcommand{\Abar}{\bar{\A}}
\newcommand{\sigmabar}{\overline{\sigma}}
\newcommand{\Stab}{\operatorname{Stab}}
\newcommand{\Hom}{\operatorname{Hom}}
\newcommand{\Mod}{\operatorname{mod}}
\newcommand{\Sph}{\operatorname{Sph}}
\newcommand{\Ext}{\operatorname{Ext}}
\newcommand{\Cone}{\operatorname{Cone}}
\newcommand{\Aut}{\operatorname{Aut}}
\newcommand{\CY}{\mathrm{CY}}
\newcommand{\SL}{\mathrm{SL}}
\newcommand{\SU}{\mathrm{SU}}
\newcommand{\PSL}{\mathrm{PSL}}
\newcommand{\xxx}[1]{}
\address{Mathematical Institute, Oxford, OX1 3LB}
\email{sutherlandt@maths.ox.ac.uk}
\title[The modular curve as stability conditions on a $\CY_3$ algebra]{The modular curve as the space of stability conditions of a $\CY_3$ algebra}
\author{Tom Sutherland}
\begin{document}
\begin{abstract}
We prove that a connected component of the space of stability conditions of a $\CY_3$ triangulated category generated by an $A_2$-collection of $3$-spherical objects is isomorphic to the universal cover of the $\C^*$-bundle of non-zero holomorphic differentials on the moduli space of elliptic curves.
\end{abstract} 
 
\maketitle

\section{Introduction}

The space of stability conditions $\Stab(\D)$ of a triangulated category $\D$ was introduced in \cite{B}.  As a set it has a description as the pairs $(\A, Z)$ where $\A$ is the heart of a t-structure on $\D$, and $Z: K(\A) \isom K(\D) \to \C$ is a stability function on $\A$ known as the central charge.  As the forgetful map $\Stab(\D) \to \Hom(K(\D), \C)$ remembering just the central charge is a local homeomorphism \cite[Prop 6.3]{B}, $\Stab(\D)$ has the structure of a complex manifold.  It carries an action of the group of autoequivalences $\Aut(\D)$ and a free action of $\C$ for which $\Z \subset \C$ acts as the autoequivalence $[1]$, the shift functor of $\D$.

In this paper we compute a connected component $\Stab^0(\D)$ of the space of stability conditions of $\D = \D_{fd}(G A_2)$, the derived category of finite dimensional modules over the Ginzburg dg algebra of the $A_2$ quiver.  This is a $\CY_3$ triangulated category generated (cf \cite[Sect 2]{KYZ}) by two objects $S$ and $T$ with
\[
\Hom(S,S) \isom \C \isom \Hom(T,T) \qquad \Ext^1(S,T) \isom \C
\]
We will call the heart $\A^0$ consisting of all modules supported in degree zero the standard heart.  It is equivalent to the abelian category of finitely generated modules over the path algebra of the $A_2$ quiver, and its two simple objects are $S$ and $T$.  We study the connected component $\Stab^0(\D)$ which contains stability conditions supported on the standard heart $\A^0$.

In section two we study the subquotient $\Aut^0(\D)$ of $\Aut(\D)$ of those autoequivalences preserving the connected component $\Stab^0(\D)$ modulo those which act trivially on it.  We show that the set of hearts supporting a stability condition in $\Stab^0(\D)$ is an $\Aut^0(\D)$-torsor and deduce that
\begin{thm} \label{aut}
$\Aut^0(\D)$ is isomorphic to the braid group $\Br_3$ on three strings.
\end{thm}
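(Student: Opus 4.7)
The plan is to exploit the torsor property stated just above: since the set $\mathcal{H}$ of hearts underlying stability conditions in $\Stab^0(\D)$ is an $\Aut^0(\D)$-torsor, identifying $\Aut^0(\D)$ with $\Br_3$ reduces to producing a free and transitive $\Br_3$-action on $\mathcal{H}$ that factors through autoequivalences of $\D$.

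Such an action comes from Seidel--Thomas spherical twists. The simples $S, T \in \A^0$ form an $A_2$-configuration of $3$-spherical objects: $\Ext^1(S,T) \isom \C$ is given, and the remaining non-trivial Ext's are forced by Serre duality in the $\CY_3$ category. The associated twists $T_S, T_T$ therefore satisfy the braid relation, producing a homomorphism $\rho: \Br_3 \to \Aut(\D)$ which is injective by the Seidel--Thomas faithfulness theorem. A direct check that $T_S^{\pm 1}$ and $T_T^{\pm 1}$ send $\A^0$ to the four elementary tilts of $\A^0$ at $S$ and $T$ shows that $\rho$ preserves $\Stab^0(\D)$, and hence descends to $\bar\rho: \Br_3 \to \Aut^0(\D)$.

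Transitivity of the induced $\Br_3$-action on $\mathcal{H}$ would follow from a wall-crossing argument: any $\A \in \mathcal{H}$ is joined to $\A^0$ by a path in $\Stab^0(\D)$, along which the heart changes by a single elementary tilt at each wall. Since each such tilt is realised by an application of $T_S^{\pm 1}$ or $T_T^{\pm 1}$, induction on the number of walls crossed shows $\A$ lies in the $\Br_3$-orbit of $\A^0$, so $\bar\rho$ is surjective.

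The main obstacle is freeness, i.e.\ that no non-identity braid preserves $\A^0$ set-theoretically. I would first observe that the action of $T_S, T_T$ on the unordered pair of simple classes $\{[S], [T]\} \subset K(\D)$ is by the two Coxeter reflections, giving a surjection $\Br_3 \twoheadrightarrow S_3$, so any $\beta$ fixing $\A^0$ must lie in the pure braid group $P_3$. For a non-identity $\beta \in P_3$ I would then argue that $\rho(\beta)(S) \notin \A^0$: by expanding $\beta$ as a reduced word in the spherical twists and tracking the cohomological amplitude of $S$ at each step using the $\CY_3$ Ext-structure, one sees that the amplitude of $\rho(\beta)(S)$ grows strictly with the length of the word, so this object cannot concentrate in a single degree of $\A^0$. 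Combined with injectivity of $\rho$, this forces $\ker(\bar\rho) = \{1\}$ and completes the identification $\Br_3 \isom \Aut^0(\D)$.
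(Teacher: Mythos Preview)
Your argument has a genuine gap at the transitivity step, and it cannot be repaired without changing the map $\rho$. The claim that $T_S^{\pm 1}$ and $T_T^{\pm 1}$ send $\A^0$ to the four simple tilts is false in the $\CY_3$ setting: for a $3$-spherical object one has $T_S(S)\isom S[-2]$, not $S[-1]$, so $T_S(\A^0)$ is not a heart adjacent to $\A^0$ in the exchange graph. In fact the paper computes explicitly that the four simple tilts of $\A^0$ are obtained by the autoequivalences $(\Phi_T\Phi_S\Phi_T)[3]$, $(\Phi_S\Phi_T)[2]$ and their inverses, each of which involves a nontrivial shift. Consequently the image of your $\bar\rho$ is only $\Sph(\D)$, which the paper shows is a proper normal subgroup of $\Aut^0(\D)$ of index $5$ (the quotient being generated by the class of $[1]$). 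So $\bar\rho$ is not surjective and the torsor argument does not go through.

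The paper's fix is to use a \emph{different} copy of $\Br_3$: the map $\sigma_1\mapsto \Phi_S[1]$, $\sigma_2\mapsto \Phi_T[1]$. These still satisfy the braid relation, and now one recovers the shift via $((\Phi_S[1])(\Phi_T[1]))^3=(\Phi_S\Phi_T)^3[6]=[-5][6]=[1]$, so the image is all of $\langle \Phi_S,\Phi_T,[1]\rangle=\Aut^0(\D)$. Injectivity is then deduced not by a cohomological--amplitude growth argument, but by a word-length trick: any relation among $\Phi_S[1],\Phi_T[1]$ yields a relation $\Phi_S^{n_1}\cdots\Phi_T^{n_k}=[-\sum n_i]$ in $\Sph(\D)$, hence lies in the centre $\langle(\Phi_S\Phi_T)^3\rangle=\langle[-5]\rangle$; comparing word lengths gives $\sum n_i=\tfrac{6}{5}\sum n_i$, forcing $\sum n_i=0$ and reducing to the Seidel--Thomas faithfulness. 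Your freeness sketch (tracking amplitude under pure braids) might be salvageable for this corrected map, but as written it is attached to the wrong homomorphism.
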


In section three we show how to define central charges using periods of a meromorphic differential $\lambda$ on the universal family of framed elliptic curves $\E \to \widetilde{\M}$.  Restricted to a fibre $E$, $\lambda$ has a single pole of order $6$ at the marked point $p$ and double zeroes at each of the half-periods.  Using the framing $\{\alpha, \beta\}$ and the basis $\{[S], [T]\}$ of $K(\D)$ to identify the lattices $H_1(E \less p, \Z) \isom K(\D)$, we prove

\pagebreak
\begin{thm} \label{stab}
 There is a biholomorphic map
\begin{center}
\begin{tikzpicture}[node distance=2cm, auto]
  \node (A) {$\widetilde{\M}$};
  \node (B) [right of=A] {$\Stab^0(\D)/ \C$};
  \node (D) [below of=B] {$\Pb\Hom(K(\D),\C)$};
;
  \draw[->] (A) to node {$f$} (B);
  \draw[->] (A) to node [anchor=east] {$[\int_\alpha \lambda : \int_\beta \lambda] \: \:$} (D);
    \draw[->] (B) to node {$[Z(S):Z(T)]$} (D);
\end{tikzpicture}
\end{center}
 lifting the period map of $\lambda$.  It is equivariant with respect to the actions of $\PSL(2,\Z)$ on the left by deck transformations and on the right by $\Aut^0(\D)/ \Z$ which are both determined by their induced actions on $K(\D)$.
\end{thm}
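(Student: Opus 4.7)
The plan is to construct $f$ on a fundamental domain by exhibiting explicit stability conditions, extend equivariantly to all of $\widetilde{\M}$, and deduce it is a biholomorphism from holomorphicity and bijectivity. First I would fix the standard fundamental domain $\F \subset \widetilde{\M}$ for the $\PSL(2,\Z)$-action, and for $\tau \in \F$ define the central charge $Z_\tau : K(\D) \to \C$ by $Z_\tau(S) = \int_\alpha \lambda$ and $Z_\tau(T) = \int_\beta \lambda$. The key technical claim at this stage is that $Z_\tau(S), Z_\tau(T) \in \h \cup \R_{<0}$ throughout $\F$, so that $(\A^0, Z_\tau)$ defines a stability condition with the standard heart; setting $f(\tau) := [(\A^0, Z_\tau)] \in \Stab^0(\D)/\C$ then gives $f$ on $\F$.

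To extend $f$ to all of $\widetilde{\M}$ I would use Theorem~\ref{aut}: $\Aut^0(\D) \cong \Br_3$ surjects onto $\PSL(2,\Z)$ via the induced action on $K(\D) \cong \Z^2$. Lifting $g \in \PSL(2,\Z)$ to $\widetilde{g} \in \Br_3$, set $f(g \cdot \tau) := \widetilde{g} \cdot f(\tau)$. This is well-defined on $\Stab^0(\D)/\C$ because the kernel of $\Br_3 \twoheadrightarrow \PSL(2,\Z)$ acts by shifts, which are absorbed in the $\C$-quotient. Continuity on common boundaries of $\F$ and its $\PSL(2,\Z)$-translates follows because simple tilts of hearts glue along central-charge walls, matching the gluing of adjacent fundamental domains in $\widetilde{\M}$.

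For bijectivity: injectivity reduces by equivariance to injectivity on $\F$. I would then observe that distinct $\PSL(2,\Z)$-translates of $\F$ map to stability conditions on distinct hearts (disjoint in $\Stab^0(\D)/\C$ by Theorem~\ref{aut}), and that injectivity of $f|_\F$ is injectivity of the period map. Surjectivity also follows from Theorem~\ref{aut}: every stability condition in $\Stab^0(\D)$ is supported on some heart $\widetilde{g} \cdot \A^0$, hence lies in the image of $f$. Holomorphicity of $f$ is clear since $Z_\tau$ depends holomorphically on $\tau$ and $\Stab^0(\D)/\C$ has a natural complex structure; a holomorphic bijection between $1$-complex-dimensional manifolds is automatically a biholomorphism. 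The diagram in the theorem commutes by construction, and equivariance is immediate since both $\PSL(2,\Z)$-actions factor through their induced action on $K(\D) \cong H_1(E \less p, \Z)$.

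The hardest step will be the technical claim in the first paragraph: verifying that the periods $\int_\alpha \lambda$ and $\int_\beta \lambda$ both lie in $\h \cup \R_{<0}$ for all $\tau \in \F$, and that the period map is injective on $\F$. I expect this to require an explicit description of $\lambda$ in terms of Weierstrass-type functions and a careful analysis of the boundary behaviour of the periods at the cusp $\tau \to i\infty$ and at the orbifold points $\tau = i$, $\tau = e^{2\pi i/3}$ on $\del \F$.
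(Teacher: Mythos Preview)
Your overall strategy—define $f$ on a fundamental domain, extend by equivariance, check continuity across walls—matches the paper's. But two steps contain genuine gaps, and the paper handles them with tools you have not invoked.

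\textbf{Injectivity.} You argue that distinct $\PSL(2,\Z)$-translates of $\F$ land in regions of $\Stab^0(\D)/\C$ supported on distinct hearts and that these regions are disjoint. They are not. The images $U(\A)/\C$ for different hearts overlap substantially: a $\C$-orbit passes through many $U(\A)$, even for hearts not related by a shift. So ``supported on distinct hearts'' does not give disjointness in the $\C$-quotient, and Theorem~\ref{aut} says nothing about this. The paper fixes this by introducing a \emph{width function} and defining a genuinely smaller region $V(\Abar^0)\subsetneq U(\A^0)/\C$ (those $\sigmabar$ for which the width is minimised on $\A^0$), and then proves that these $V(\Abar)$ tile $\Stab^0(\D)/\C$ as a fundamental domain for $\Aut^0(\D)/\Z$.

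\textbf{Surjectivity and the ``key technical claim''.} Even granting that the periods lie in $\h\cup\R_{<0}$ on $\F$, you only get $f(\F)\subseteq U(\A^0)/\C$; you never show $f(\F)$ equals a fundamental domain on the stability side, so ``every stability condition is on some $\widetilde g\cdot\A^0$'' does not imply it is in the image of $f$. The paper avoids any direct estimate of the periods. Instead it derives the Picard--Fuchs equation for the periods of $\lambda$, shows it is hypergeometric with exponents $(1,\tfrac13,\tfrac12)$, and applies the Schwarz triangle theorem to conclude that the image of a branch of the period map is the doubled curvilinear triangle $\lozenge=\Delta_{1,\frac13,\frac12}$. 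Separately, by analysing the three boundary walls of $V(\Abar^0)$, it shows $\bar Z(V(\Abar^0))=\lozenge$ as well. Matching these two descriptions of $\lozenge$ gives the bijection on fundamental domains directly; equivariant extension and a monodromy check on the branch cuts finish the proof. Your proposed direct analysis of the periods at $i$, $e^{2\pi i/3}$, and $i\infty$ is neither needed nor, as stated, sufficient.
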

 
As a corollary we obtain a $\Br_3$-equivariant biholomorphism from the universal cover of the $\C^*$-bundle $L^*$ of non-zero holomorphic differentials on $\M$ to $\Stab^0(\D)$.

\begin{rmk} \label{painleve}
In \cite{vdPS} the authors list 9 families of rank two connections on $\Pb^1$ having at least one irregular singularity  which have precisely a one-parameter family of isomonodromic deformations described by one of the Painlev\'{e} equations.  To each such family we associate a quiver $Q$ as in \cite{GMN}, where $Q = A_2$ corresponds to the family whose isomonodromic deformations are given by solutions to the first Painlev\'{e} equation.   It is anticipated that similar considerations to those of this paper will give a description of the space of numerical stability conditions of $\D_{fd}(GQ)$ as the universal cover of a $\C^*$-bundle of meromorphic differentials over a moduli space of elliptic curves.  We intend to return to this in future work.
\end{rmk}

The author would like to thank his PhD supervisor Tom Bridgeland for suggesting the problem and for many helpful discussions, and the EPSRC for financial support.

\section{Autoequivalences}

In this section we prove Theorem \ref{aut}.  We show that every heart supporting a stability condition in $\Stab^0(\D)$ is a translate of the standard heart $\A^0 = \Mod(\C A_2)$ by a composite of a spherical twist and the shift functor $[1]$.  We deduce that every element of $\Aut^0(\D)$ is expressible in this way.  The group of spherical twists $\Sph(\D)$ is a subgroup of $\Aut^0(\D)$ of index five, and we use a result of Seidel-Thomas that $\Sph(\D) \isom \Br_3$ to deduce that $\Aut^0(\D) \isom \Br_3$, the braid group on three strings.

\begin{defn}
An object $X \in \D$ is spherical if $\Hom_{\D}(X, X) \isom \C \oplus \C[-3]$.  For $X$ spherical there is a twist functor $\Phi_X$ such that
\[
 \Phi_X(Y) = \Cone(X \otimes \Hom(X,Y) \to Y)
\]
\end{defn}
There are two spherical objects $S$ and $T$ in $\D$ which are the simple objects in the standard heart $\A^0$. They form an $A_2$-collection \cite[Def 1.1]{ST} as $\Ext^1(S,T) \isom \C$.

\begin{thm} \cite[Thms 1.2, 1.3]{ST}
 The spherical twists $\Phi_S, \Phi_T$ satisfy the braid relations
\[
 \Phi_S \Phi_T \Phi_S = \Phi_T \Phi_S \Phi_T
\]
 and generate a subgroup $\Sph(\D)$ of the group of autoequivalences $\Aut(\D)$ isomorphic to the braid group on three strings $\Br_3$. 
\end{thm}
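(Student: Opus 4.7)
The plan is to establish the two assertions separately: first the braid relation, then the faithfulness of the resulting $\Br_3$-action.

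For the braid relation, the key observation is the conjugation formula $\Psi \circ \Phi_X \circ \Psi^{-1} \cong \Phi_{\Psi(X)}$, valid for any autoequivalence $\Psi$ and spherical object $X$. This follows from applying $\Psi$ to the defining triangle of the twist, using that $\Psi$ commutes with $\Cone$ and intertwines the evaluation morphism. Granted this, the relation $\Phi_S \Phi_T \Phi_S \cong \Phi_T \Phi_S \Phi_T$ reduces to identifying $\Phi_S \Phi_T(S)$ with $T$ up to shift, for then
\[
\Phi_S \Phi_T \Phi_S (\Phi_S \Phi_T)^{-1} \cong \Phi_{\Phi_S \Phi_T(S)} \cong \Phi_T,
\]
which rearranges to the braid relation. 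The identification of $\Phi_S\Phi_T(S)$ is a cone calculation using the $\CY_3$ Serre duality $\Hom^{\bullet}(T,S) \cong \Hom^{\bullet}(S,T)^\vee[-3] \cong \C[-2]$: one first computes $\Phi_T(S) = \Cone(T[-2] \to S)$ as a non-trivial extension, then applies $\Phi_S$ and identifies the resulting iterated cone, using the vanishings implied by the $A_2$-collection hypothesis, with a shift of $T$.

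For faithfulness the strategy is to exhibit an invariant on which the action of $\Br_3$ is visibly free. A natural candidate is the action on the set of hearts of bounded $t$-structures obtained from $\A^0$ by iterated simple tilts: each generator $\Phi_S$ or $\Phi_T$ moves the heart by a tilt at a single simple object, and the combinatorics of these tilts can be encoded as a path in the \emph{tilting graph} (or exchange graph). Using a Garside normal form for $\Br_3$, one then argues that distinct normal forms produce distinct paths, hence distinct terminal hearts; since the action on hearts factors through $\Aut(\D)$, no non-trivial braid word can act as the identity autoequivalence.

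The main obstacle is this faithfulness step. The braid relation itself is essentially formal once the conjugation formula is in place. By contrast, ruling out non-trivial relations requires either a geometric model of $\D$ (as Seidel--Thomas provide via the McKay correspondence for $A_n$ singularities, where spherical twists correspond to genuine half-twists of real vanishing cycles) or a careful combinatorial control of how iterated twists modify the heart or some reference object. In either case, one must show that the complexity of $\Phi_{w}(S)$ for a reduced word $w$ grows with the word length, preventing accidental isomorphisms.
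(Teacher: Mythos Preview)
The paper does not prove this theorem at all: it is quoted verbatim from Seidel--Thomas \cite[Thms 1.2, 1.3]{ST} and used as a black box. In particular, the paper's subsequent analysis of the exchange graph and of $\Aut^0(\D)$ \emph{relies} on the faithfulness statement, so any attempt to deduce faithfulness from the exchange-graph combinatorics developed later in the paper would be circular.

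Your argument for the braid relation via the conjugation formula $\Psi\Phi_X\Psi^{-1}\cong\Phi_{\Psi(X)}$ and the identification of $\Phi_S\Phi_T(S)$ with a shift of $T$ is correct and is essentially the standard proof. The faithfulness sketch, however, contains a concrete error: it is not true that $\Phi_S$ or $\Phi_T$ moves the standard heart by a single simple tilt. In this $\CY_3$ setting one has $\Phi_S(S)\cong S[-2]$ (not $S[-1]$) and $\Phi_S(T)\cong E$, so $\Phi_S(\A^0)$ has simple objects $S[-2]$ and $E$, which is none of the four simple tilts of $\A^0$ listed in the paper. Indeed the paper observes that $\Phi_S^{-1}$ factors as a product of \emph{two} of the tilt-autoequivalences $(\Phi_T\Phi_S\Phi_T)[3]$ and $(\Phi_S\Phi_T)[2]$. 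This breaks the proposed bijection between reduced braid words and paths in the tilting graph, and with it the Garside argument as stated. Seidel--Thomas instead prove faithfulness geometrically, via the derived category of the resolved $A_n$ surface singularity and an intersection-number invariant in the spirit of Khovanov--Seidel; your closing paragraph correctly identifies this as the genuine content, but the exchange-graph route you outline does not work without substantial modification.
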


The braid group $\Br_3$ has the following presentation by generators and relations \cite[Sect 1.14]{KT}
\[
 \langle \sigma_1, \sigma_2 \: |  \: \sigma_1 \sigma_2 \sigma_1 = \sigma_2 \sigma_1 \sigma_2 \rangle
\]
Its centre is the infinite cyclic subgroup generated by the element $u = (\sigma_1 \sigma_2)^3$ \cite[Thm 1.24]{KT} giving us the short exact sequence
\[
 1 \to \Z \to \Br_3 \to \PSL(2, \Z) \to 1
\]
where the quotient map sends the generators $\sigma_1, \sigma_2$ to 
\[
\left ( \begin{array}{cc} 1 & 1 \\
                           0 & 1 \\
 \end{array} \right )
\qquad \left ( \begin{array}{cc} 1 & 0 \\
                                -1 & 1\\
 \end{array} \right )
\]
We note that the action of a spherical twist $\Phi_X$ on $K(\D)$ is given by the formula
\[
 \Phi_X([Y]) = [Y] - \chi(X,Y) [X]
\]
and so the map $\Sph(\D) \to \PSL(2,\Z)$ sends a spherical twist to the matrix given by its action on the lattice $K(\D)$ with respect to the basis $\{[S], [T]\}$.

We now study the combinatorial backbone of the space of stability conditions, namely (a connected component of) the \emph{exchange graph} of hearts of $\D$.

\begin{defn}  We say $\A'$ is a simple tilt of $\A$ at $S$ if either
\begin{itemize}
\item $\A'$ is the left tilt of $\A$ with respect to the torsion pair 
\[
 \T = \langle S \rangle = \{ S^{\oplus n} \: | \: n \in \N_0\} \qquad \F = \{X \: | \: \Hom_{\A}(S,X)=0\}
\]
\item $\A'$ is the right tilt of $\A$ with respect to the torsion pair 
\[
\T = \{X  \: | \: \Hom_{\A}(X,S)=0 \} \qquad \F = \langle S \rangle = \{ S^{\oplus n} \: | \: n \in \N_0\}
\]
\end{itemize}
\end{defn}

The relevance of simple tilts is that they occur precisely at the codimension $1$ components of the boundary of the space of stability conditions $U(\A)$ supported on a given heart $\A$ by \cite[Lemma 5.5]{Br2}.  Thus $\Stab(\D)$ is glued together from the $U(\A)$ according to the exchange graph.

\begin{defn}
 The exchange graph $\EG(\D)$ of $\D$ has vertices the set of hearts $\A \subset \D$ and an edge between any two hearts related by a simple tilt.  Define $\EG^0(\D)$ to be the connected component containing the standard heart $\A^0$.
\end{defn}
\noindent We compute the four simple tilts of the standard heart $\A^0$.
\begin{prop}
Denote by $E$ and $X$ the unique non-trivial extensions up to isomorphism of $S$ by $T$ and $T$ by $S[1]$ respectively.  Let $(A,B)_C$ denote the abelian category generated by two simple objects $A$ and $B$ having a unique up to isomorphism non-trivial extension $C$ of $B$ by $A$, so that the standard heart $\A^0 = (T,S)_E$.  Then
 \begin{align*}
  & R_S(\A^0) = (S[1], T)_{X[1]} &  L_T(\A^0) &= (T[-1], S)_{X} \\
  & R_T(\A^0) = (E, T[1])_S      &  L_S(\A^0) &= (S[-1], E)_T
 \end{align*}
Moreover the tilted hearts are obtained from $\A^0$ by applying the following autoequivalences.
\begin{align*}
 & R_S(\A^0) = (\Phi_T \Phi_S \Phi_T) [3] \;(\A^0) &  L_T(\A^0) &= ((\Phi_T \Phi_S \Phi_T) [3])^{-1} \;(\A^0) \\
  & R_T(\A^0) = (\Phi_S \Phi_T) [2] \; (\A^0)      &  L_S(\A^0) &= ((\Phi_S \Phi_T) [2])^{-1} \;(\A^0) 
\end{align*}

\end{prop}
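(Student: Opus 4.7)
The plan is to treat the two assertions of the proposition separately: first identify each of the four simple tilts as an abelian category, then verify that the specified autoequivalences carry $\A^0$ to them.

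For the first task, each tilted heart is computed by applying the Happel-Reiten-Smalo construction to the explicit torsion pair on $\A^0 = \Mod(\C A_2)$. The indecomposables of $\A^0$ are $S$, $T$, and $E$, so the Hom-space computations required to split $\A^0$ into torsion and torsion-free parts reduce to checking which of these three objects admits a non-zero map to or from the tilted simple. For instance, for $R_T(\A^0)$ one finds $\T = \{X : \Hom(X,T) = 0\} = \langle S, E \rangle$ and $\F = \langle T \rangle$; the unique non-split triangle $T \to E \to S \to T[1]$ in $\D$ rotates to $E \to S \to T[1] \to E[1]$, exhibiting $S$ as a non-split extension of $T[1]$ by $E$ in $R_T(\A^0)$. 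Thus the simples are $E$ and $T[1]$, and the $\CY_3$ structure makes the relevant $\Ext^1$ space one-dimensional, giving the unique non-trivial extension. The three remaining tilts proceed identically, using the same non-split triangle in $\D$ rotated or shifted by a different amount.

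For the second task, I would verify that each autoequivalence carries the simples of $\A^0$ to the simples of the claimed image heart. The key computational inputs are: $\Phi_X(X) = X[-2]$ for any $3$-spherical $X$, following from $\RHom(X,X) = \C \oplus \C[-3]$; $\Phi_S(T) = E$ and $\Phi_T(E) = S$, both read off directly from the extension triangle $T \to E \to S \to T[1]$ via the $\CY_3$ computations of $\RHom(S,T)$ and $\RHom(T,E)$; and $\Phi_T(S) = X[-1]$. Combining these inputs, a brief composition gives, for example, that $(\Phi_S \Phi_T)[2]$ sends $T$ to $\Phi_S(T[-2])[2] = E$ and $S$ to $\Phi_S(X[-1])[2] = T[-1][2] = T[1]$, which are exactly the simples of $R_T(\A^0)$. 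The corresponding calculations for $R_S(\A^0)$, $L_S(\A^0)$ and $L_T(\A^0)$ follow from the same building blocks together with the auxiliary computations $\Phi_S(X) = T$ and $\Phi_T(T[-1]) = T[-3]$.

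The main obstacle is identifying $\Phi_T(S)$ explicitly. Unlike the easy cases, the defining triangle $T[-2] \to S \to \Phi_T(S) \to T[-1]$ does not arise from a short exact sequence in $\A^0$, and the connecting morphism $T[-2] \to S$ lies in the one-dimensional space $\Ext^2(T,S)$ whose non-triviality comes from the $\CY_3$ Serre duality pairing with $\Ext^1(S,T)$. I would identify $\Phi_T(S) = X[-1]$ by shifting the defining triangle $S[1] \to X \to T \to S[2]$ of $X$ by $-1$ and rotating, which produces a triangle $T[-2] \to S \to X[-1] \to T[-1]$ whose first morphism is non-zero; since $\Ext^2(T,S)$ is one-dimensional, any two such triangles are isomorphic, so $\Phi_T(S) \cong X[-1]$. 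The same shifted triangle then yields $\Phi_S(X[-1]) = T[-1]$, and the analogous computation that $\Hom_\D(S, X[-1]) = \C$ is the only non-zero Ext-space from $S$ to $X$ gives $\Phi_S(X) = T$ via the defining inclusion $S[1] \hookrightarrow X$.
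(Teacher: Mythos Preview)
Your argument is correct but takes a different route from the paper. The paper treats only $L_T(\A^0)$ and proves that $T[-1]$ and $S$ are simple there by a t-structure argument: given an arbitrary short exact sequence $0 \to X \to T[-1] \to Y \to 0$ (resp.\ with $S$ in the middle) in the tilted heart, it passes to the long exact sequence in cohomology with respect to $\A^0$ and uses that $H^0 \in \F$, $H^1 \in \T$ to force the sequence to split. You instead exploit the finiteness of $\A^0$: since the indecomposables are just $S,T,E$, the torsion and torsion-free parts are read off by inspection, and rotating the triangle $T \to E \to S \to T[1]$ exhibits the non-trivial extension directly. Your method is quicker here and also supplies the autoequivalence verification, which the paper does not spell out; the paper's method is more structural and would survive in situations where one does not have a list of indecomposables. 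One small gap: you assert ``thus the simples are $E$ and $T[1]$'' without saying why these two objects are simple in $R_T(\A^0)$---this is exactly what the paper's cohomology argument establishes. In your setup the cleanest fix is to note that $K(\D)$ has rank two, so the tilted heart has exactly two simples, and since you have shown $S$ is a non-split extension the only candidates are $E$ and $T[1]$.
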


We will prove the statement about the left tilt at $T$, the rest being similar.  The torsion pair in this case is
\[
 \T = \langle T \rangle \qquad \F = \{X \in \A^0 \: | \: \Hom_{\A^0}(T,X)=0 \} = \langle S \rangle
\]
We will use the long exact sequence in cohomology with respect to the original t-structure $\A^0$, the groups being non-zero only in degrees $0$ and $1$.

\begin{lemma}
 $T[1]$ is simple in $L_T(\A^0)$
\end{lemma}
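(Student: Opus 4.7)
The plan is to show directly that any subobject of $T[1]$ in the abelian category $L_T(\A^0)$ is either zero or all of $T[1]$. I would realise such an inclusion as a distinguished triangle $Y \to T[1] \to Z \to Y[1]$ in $\D$ and feed it into the long exact sequence of cohomology with respect to the original t-structure with heart $\A^0$. Since both $Y$ and $Z$ lie in $L_T(\A^0)$, each has cohomology concentrated in at most two consecutive degrees, with the torsion part landing in $\T = \langle T \rangle$ and the torsion-free part in $\F = \langle S \rangle$; and since the cohomology of $T[1]$ is concentrated in a single degree where it equals $T$, the long exact sequence collapses to a four-term exact sequence in $\A^0$ of the form
\[
0 \to H^{-1}(Y) \to T \to H^{-1}(Z) \to H^0(Y) \to 0,
\]
together with the vanishing $H^0(Z) = 0$, where $H^{-1}(Y)$ and $H^{-1}(Z)$ lie in $\langle T \rangle$ and $H^0(Y)$ lies in $\langle S \rangle$.

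From here I would split into two cases according to the subobject $H^{-1}(Y) \subset T$ in $\A^0$. Since $T$ is simple in $\A^0 = \Mod(\C A_2)$, this subobject is either $0$ or $T$. If $H^{-1}(Y) = T$, the first map in the four-term sequence is an isomorphism and the sequence degenerates to an isomorphism $H^{-1}(Z) \cong H^0(Y)$; the left-hand side lies in $\langle T \rangle$ and the right in $\langle S \rangle$, forcing both to vanish and hence $Y \cong T[1]$. If $H^{-1}(Y) = 0$, the four-term sequence simplifies, after writing the $\langle T \rangle$- and $\langle S \rangle$-components in terms of their simples, to a short exact sequence of the shape $0 \to T \to T^{\oplus n} \to S^{\oplus m} \to 0$ in $\A^0$, and the main point of the proof is to rule out all but the trivial case here.

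This second case is the one step that is not routine bookkeeping: it relies on the elementary but essential fact that the only subobjects of $T^{\oplus n}$ in $\Mod(\C A_2)$ are the evident copies $T^{\oplus k}$, whose quotients again lie in $\langle T \rangle$. Combined with $\langle S \rangle \cap \langle T \rangle = 0$ in $\A^0$, this forces $m = 0$ and $n = 1$, so that $H^0(Y) = 0$ and all cohomology of $Y$ vanishes, giving $Y = 0$. The two cases together yield the simplicity of $T[1]$ in $L_T(\A^0)$.
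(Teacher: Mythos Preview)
There is a genuine error in your setup that propagates through the whole argument. The lemma statement in the paper contains a typo: the simple object of $L_T(\A^0)$ is $T[-1]$, not $T[1]$ --- see the Proposition immediately preceding the lemma, which asserts $L_T(\A^0)=(T[-1],S)_X$, and the paper's own proof, which works with $T[-1]$ throughout. More importantly, the paper states explicitly that objects of $L_T(\A^0)$ have $\A^0$-cohomology concentrated in degrees $0$ and $1$, with $H^0\in\F=\langle S\rangle$ and $H^1\in\T=\langle T\rangle$. Your assertion that the cohomology lies in degrees $-1$ and $0$ with $H^{-1}\in\T$ and $H^0\in\F$ is therefore not the heart under discussion; in fact that collection of objects is not the heart of any t-structure, because $(\langle S\rangle,\langle T\rangle)$ is \emph{not} a torsion pair in $\A^0$ (the indecomposable $E$ has $\Hom(S,E)=0$, so it admits no nonzero subobject in $\langle S\rangle$, yet $E\notin\langle T\rangle$).

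With the correct conventions, the long exact sequence for a short exact sequence $0\to Y\to T[-1]\to Z\to 0$ in $L_T(\A^0)$ forces $H^0(Y)=0$ and yields the four-term sequence
\[
0 \to H^0(Z) \to H^1(Y) \to T \to H^1(Z) \to 0,
\]
which is structurally different from yours: here $T$ sits in the third position and $H^1(Y)\to T$ need not be a monomorphism, so your case split on a subobject of $T$ has no direct analogue. The paper's key step is instead that the first map $H^0(Z)\to H^1(Y)$ goes from $\F$ to $\T$ and hence vanishes (since $\Hom(S,T)=0$), which then gives $H^1(Y)\hookrightarrow T$ and the dichotomy $H^1(Y)\in\{0,T\}$. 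Your underlying instinct --- exploiting that $\langle S\rangle$ and $\langle T\rangle$ meet only in zero and are each closed under subquotients --- is exactly the right kind of tool, but it has to be applied to the correct exact sequence once the shift convention is fixed.
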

\begin{proof}
 Consider a short exact sequence in $L_T(\A^0)$
\[
 0 \to X \to T[-1] \to Y \to 0
\]
giving a long exact sequence in $\A^0$.
\[
 0 \to H^0(X) \to H^0(T[-1]) \to H^0(Y) \to H^1(X) \to H^1(T[-1]) \to H^1(Y) \to 0
\]
We have $H^0(T[-1]) = 0$ so $H^0(X) = 0$. Splitting the remaining 4-term exact sequence into two short exact sequences
\begin{align*}
 0 &\to H^0(Y) \to H^1(X) \to Z \to 0 \\
0 &\to Z \to T \to H^1(Y) \to 0
\end{align*}
$Z$ is either $0$ or $T$ as $T$ is simple in $\A^0$. But there are no non-zero maps from $H^0(Y) \in \F$ to $H^1(X) \in \T$ so $H^1(X) \isom Z$ so $X$ is either $0$ or $T[-1]$ and so $T[-1]$ is simple.

\end{proof}

\begin{lemma}
$S$ is simple in $L_T(\A^0)$.
\end{lemma}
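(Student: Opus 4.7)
The plan is to mirror the preceding lemma. I would take a short exact sequence $0 \to X \to S \to Y \to 0$ in the tilted heart $L_T(\A^0) = \langle \T[-1], \F \rangle$ and apply the long exact sequence of $\A^0$-cohomology. Since $S \in \F \subset \A^0$ has $H^0(S) = S$ and $H^1(S) = 0$, and objects of $L_T(\A^0)$ have $\A^0$-cohomology concentrated in degrees $0$ and $1$ only, the long exact sequence collapses to
\[
0 \to H^0(X) \to S \to H^0(Y) \to H^1(X) \to 0,
\]
together with $H^1(Y) = 0$.

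By construction of the tilt, every object $Z \in L_T(\A^0)$ satisfies $H^0(Z) \in \F = \langle S \rangle$ and $H^1(Z) \in \T = \langle T \rangle$. Hence the connecting map $H^0(Y) \to H^1(X)$ in the sequence above is a morphism in $\A^0$ from a direct sum of copies of $S$ to a direct sum of copies of $T$. Since $\Hom_{\A^0}(S, T) = 0$, this map must vanish, which forces $H^1(X) = 0$ and makes the arrow $S \to H^0(Y)$ surjective.

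To finish I invoke simplicity of $S$ in $\A^0$: the kernel $H^0(X)$ of this surjection is either $0$ or $S$. In the first case $S \to H^0(Y)$ is an isomorphism, so both cohomologies of $X$ vanish and hence $X = 0$, giving $Y \isom S$. In the second case $S \to H^0(Y)$ is the zero map, so $H^0(Y) = 0$, whence $Y = 0$ and $X \isom S$. Either way the short exact sequence is trivial, establishing that $S$ is simple in $L_T(\A^0)$.

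The argument is essentially symmetric to the one for $T[-1]$ and I do not expect any real obstacle; the only substantive difference is that $S$ sits in $\A^0$-cohomological degree $0$ rather than degree $1$, which merely swaps which end of the long exact sequence vanishes automatically. The small point to be careful about is using $\Hom_{\A^0}(S,T) = 0$ in exactly the right place — to kill the connecting map between $\F$ and $\T$ — rather than the asymmetric fact $\Hom_{\A^0}(T,S) = 0$ used in the previous lemma.
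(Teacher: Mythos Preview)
Your proof is correct and follows essentially the same route as the paper: both take a short exact sequence, pass to the long exact sequence in $\A^0$-cohomology, and use $\Hom_{\A^0}(\F,\T)=\Hom_{\A^0}(S,T)=0$ together with simplicity of $S$ in $\A^0$; the only cosmetic difference is that the paper splits the four-term sequence via an auxiliary object $Z$. One small correction to your closing remark: the previous lemma also uses $\Hom_{\A^0}(\F,\T)=0$ (i.e.\ $\Hom(S,T)=0$), not $\Hom(T,S)=0$, so the two proofs invoke the \emph{same} vanishing, not symmetric ones.
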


\begin{proof}  As $H^1(S) =0$ we have as before
\begin{align*}
 0 &\to H^0(X) \to S \to Z \to 0 \\
 0 &\to Z \to H^0(Y) \to H^1(X) \to 0
\end{align*}
Thus as $S$ is simple in $\A^0$, $H^0(X)$ is either $0$ or $S$, and so $Z$ is either $S$ or $0$.  Then as there are no non-zero maps from $H^0(Y) \in \F$ to $H^1(X) \in T$, $H^1(X) = 0$ and so $S$ is simple in $L_T(\A^0)$.
\end{proof}

\noindent We remark that all four simple tilts of $\A^0$ are isomorphic to $\A^0$ so the above is the local structure of the exchange graph at any vertex of the connected component $\EG^0(\D)$.

\begin{defn}
Let $\Aut^0(\D)$ be the subquotient of $\Aut(\D)$ consisting of all autoequivalences preserving the connected component $\EG^0(\D)$ of the exchange graph modulo those acting trivially on it.
\end{defn}
\noindent We will see later that in fact $\Aut^0(\D)$ is the subquotient preserving the connected component $\Stab^0(\D)/ \C$ modulo those acting trivially on it.
\begin{prop}
 The vertices of the connected component $\EG^0(\D)$ of the exchange graph are a torsor for $\Aut^0(\D)$.
\end{prop}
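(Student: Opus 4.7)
The plan is to verify that $\Aut^0(\D)$ acts both transitively and freely on the vertex set of $\EG^0(\D)$.

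\emph{Transitivity.} The preceding proposition exhibits, for each of the four simple tilts of the standard heart $\A^0$, an explicit autoequivalence (a composite of $\Phi_S$, $\Phi_T$ and powers of the shift) carrying $\A^0$ onto that tilt. Because each simple tilt of $\A^0$ is itself isomorphic to $\A^0$ as an abelian category, the same four autoequivalences, suitably conjugated by whatever autoequivalence has already been used to reach a given vertex, realise the four simple tilts at that vertex. I would then induct on graph distance from $\A^0$ to conclude that every vertex of $\EG^0(\D)$ has the form $\Phi(\A^0)$ for some $\Phi$ preserving $\EG^0(\D)$.

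\emph{Freeness.} By the definition of $\Aut^0(\D)$ as a subquotient, freeness reduces to the following claim: if $\Phi$ preserves $\EG^0(\D)$ and fixes even a single vertex $\A$, then $\Phi$ fixes every vertex. Using transitivity I would conjugate to assume $\A = \A^0$. Then $\Phi$ restricts to an exact autoequivalence of $\A^0 \isom \Mod(\C A_2)$, and so permutes its two simples $S, T$ up to isomorphism. The asymmetry $\Ext^1(S,T) \isom \C \neq 0 = \Ext^1(T,S)$ forces $\Phi(S) \isom S$ and $\Phi(T) \isom T$, so $\Phi$ fixes each of the four simple tilts $R_S(\A^0), R_T(\A^0), L_S(\A^0), L_T(\A^0)$. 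Since every one of these hearts is again isomorphic to $\A^0$, the same argument applies at each neighbour, and connectedness of $\EG^0(\D)$ propagates the fixing to every vertex. Hence $\Phi$ acts trivially on $\EG^0(\D)$ and represents the identity in $\Aut^0(\D)$.

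\emph{Main obstacle.} Transitivity is essentially formal given the previous proposition. The substance lies in freeness, and the delicate point is ruling out an autoequivalence that fixes a heart but swaps its two simples. This is exactly where the non-symmetry $\Ext^1(S,T) \neq \Ext^1(T,S)$ of the $A_2$ quiver is essential; for a symmetric quiver (e.g.\ Kronecker) the corresponding argument would fail. Once the simples are pinned down at a single heart, the inductive propagation along $\EG^0(\D)$ is routine.
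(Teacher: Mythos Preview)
Your argument is correct. Transitivity is handled exactly as in the paper: the previous proposition shows each simple tilt of $\A^0$ is reached by an explicit autoequivalence in $\langle \Phi_S,\Phi_T,[1]\rangle$, and since every vertex of $\EG^0(\D)$ is again isomorphic to $\A^0$ this propagates along the graph.

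For freeness you do more than the paper does. The paper's own proof is the single clause ``acts freely by definition,'' which literally only gives \emph{faithfulness}: the quotient defining $\Aut^0(\D)$ kills autoequivalences acting trivially on all of $\EG^0(\D)$, not those fixing a single vertex. Your argument supplies the missing step---an autoequivalence fixing $\A^0$ must permute its two simples, the asymmetry $\Ext^1(S,T)\isom\C$, $\Ext^1(T,S)=0$ forbids the swap, hence the four neighbouring tilts are fixed, and induction along the connected graph finishes. This is precisely the content one needs to pass from faithful to free here, and your observation that it hinges on the $A_2$ quiver having no nontrivial automorphism (and would fail e.g.\ for the Kronecker quiver) is apt. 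So your proof follows the paper's route but is more careful at the one point where the paper is elliptic.
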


\begin{proof}
 From the above computation every heart in $\EG^0(\D)$ can be obtained by applying an autoequivalence in $\langle \Phi_S, \Phi_T, [1]\rangle$ to the standard heart $\A^0$.  Thus $\Aut^0(\D)$ acts transitively on $\EG^0(\D)$ and acts freely by definition. 
\end{proof}

\begin{lemma} The centre of $\Sph(\D)$ is generated by $[-5]$.
\end{lemma}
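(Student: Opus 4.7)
By Seidel--Thomas, $\Sph(\D) \isom \Br_3$. Combined with the recalled fact that $Z(\Br_3) = \langle (\sigma_1 \sigma_2)^3 \rangle$, this reduces the lemma to showing that the single autoequivalence $u := (\Phi_S \Phi_T)^3$ is naturally isomorphic to the shift $[-5]$, since then $Z(\Sph(\D)) = \langle u \rangle = \langle [-5] \rangle$.

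Setting $\tau := \Phi_S \Phi_T$, my plan is to compute $\tau$ on each of the three objects $S$, $T$, $E$ (with $E$ the non-trivial extension of $S$ by $T$ from the preceding proposition), and to establish the cyclic pattern
\[
\tau(S) \isom T[-1], \qquad \tau(T) \isom E[-2], \qquad \tau(E) \isom S[-2].
\]
Chaining yields $\tau^3(S) \isom \tau^2(T[-1]) \isom \tau(E[-3]) \isom S[-5]$, and the braid identity $(\Phi_S \Phi_T)^3 = (\Phi_T \Phi_S)^3$ applied to the symmetric cycle through $T$ gives $\tau^3(T) \isom T[-5]$. Each entry of the cycle will be produced from the cone defining $\Phi_X(Y)$ together with the $A_2$-collection data: the easy cases $\Phi_X(X) \isom X[-2]$ (for $X = S, T$), $\Phi_T(S) \isom X[-1]$, and $\Phi_S(T) \isom E$ fall out because the relevant Hom complexes are concentrated in a single degree. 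The crucial steps are $\Phi_S(X) \isom T$ and $\Phi_T(E) \isom S$: to compute $\RHom(S, X)$ and $\RHom(T, E)$ I would apply $\RHom(S, -)$ to $S[1] \to X \to T$ and $\RHom(T, -)$ to $T \to E \to S$. The resulting long exact sequences contain a connecting map given by Yoneda multiplication, which by $\CY_3$ duality is the non-degenerate pairing between $\Ext^1(S,T)$ and $\Ext^2(T,S)$ landing in $\Ext^3$. Between one-dimensional spaces this forces the connecting map to be an isomorphism, collapsing $\RHom(S,X) \isom \C[1]$ and $\RHom(T,E) \isom \C$, and the cones $\Phi_S(X)$ and $\Phi_T(E)$ then read off the original triangles as $T$ and $S$ respectively.

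To upgrade the pointwise identifications $\tau^3(S) \isom S[-5]$ and $\tau^3(T) \isom T[-5]$ to a natural isomorphism $u \isom [-5]$ of autoequivalences, I would consider $\Psi := u \circ [5]$, which fixes both generators $S$ and $T$ up to isomorphism and—since $u$ is central in $\Sph(\D)$ and $[5]$ is central in all of $\Aut(\D)$—commutes with $\Phi_S$ and $\Phi_T$; as $\D$ is triangulated-generated by $\{S, T\}$, this forces $\Psi \isom \id$. The main obstacle is the pair of $\RHom$ calculations whose collapse depends on non-degeneracy of the $\CY_3$ pairing; once these are in hand the rest of the argument is formal.
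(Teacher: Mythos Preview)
Your computational core is exactly the paper's proof: reduce via $\Sph(\D)\isom\Br_3$ to evaluating $(\Phi_S\Phi_T)^3$ on the generators, and track the twists step by step. Your $3$-cycle $\tau(S)\isom T[-1]$, $\tau(T)\isom E[-2]$, $\tau(E)\isom S[-2]$ is just a repackaging of the paper's two six-step chains (which pass through the same intermediate objects $X$, $E$, $S[-2]$, etc.), and your use of the $\CY_3$ pairing to collapse $\RHom(S,X)$ and $\RHom(T,E)$ is the honest justification the paper leaves implicit. One small redundancy: you do not need the braid identity to get $\tau^3(T)\isom T[-5]$; your own cycle already gives $\tau^3(T)=\tau^2(E[-2])=\tau(S[-4])=T[-5]$ directly.

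Where you diverge from the paper is the final ``upgrade'' to a natural isomorphism $u\isom[-5]$. The paper does not attempt this: it only asserts $\Phi=[-5]$ \emph{in $\Aut^0(\D)$}, and that follows immediately once $\Phi(S)\isom S[-5]$ and $\Phi(T)\isom T[-5]$, since then $\Phi$ and $[-5]$ send the standard heart $\A^0$ to the same heart $\A^0[-5]$, and the vertices of $\EG^0(\D)$ are an $\Aut^0(\D)$-torsor. Your argument that $\Psi=u\circ[5]$ fixing $S,T$ and commuting with the twists forces $\Psi\isom\id$ is not complete as stated---an autoequivalence fixing generators up to non-natural isomorphism need not be the identity without controlling its action on the one-dimensional $\Ext$-spaces---but for the lemma as the paper states and uses it, this step is simply unnecessary.
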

\begin{proof}
 As $\Sph(\D) \isom \Br_3$ the centre is generated by $\Phi = (\Phi_S \Phi_T)^3$.  We compute $\Phi$ on $S$ and $T$
\begin{align*}
 S \mapsto X\phantom{[-2]} \mapsto T[-1]  \mapsto T[-3] \mapsto E[-3] \mapsto S[-3]  \mapsto S[-5] \\
 T \mapsto T[-2]           \mapsto E[-2]  \mapsto S[-2] \mapsto S[-4] \mapsto X[-4]  \mapsto T[-5] 
\end{align*}
 Thus $\Phi = [-5]$ in $\Aut^0(\D)$.
\end{proof}

We note that $\Sph(\D)$ defines a subgroup of $\Aut^0(\D)$ isomorphic to $\Br_3$.  The generators $\Phi_S$ and $\Phi_T$ are composites of two autoequivalences corresponding to simple tilts, e.g. $\Phi_S^{-1} = (\Phi_T \Phi_S \Phi_T [3]) (\Phi_S \Phi_T [2])$ and so preserve the connected component of the exchange graph.  If an element of $\Sph(\D)$ acts trivially on $K(\D)$ then it belongs to the centre which we have just seen is generated by a non-trivial element of $\Aut^0(\D)$ so the only element of $\Sph(\D)$ acting trivially is the identity.

\begin{thm} The map $\Br_3 \to \Aut^0(\D)$ given by $(\sigma_1, \sigma_2) \mapsto (\Phi_S [1], \Phi_T [1])$ is an isomorphism. 
\end{thm}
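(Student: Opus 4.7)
The plan is to verify three things in turn: that the prescribed assignment extends to a well-defined homomorphism $\Br_3 \to \Aut^0(\D)$, that this homomorphism is surjective using the torsor action on $\EG^0(\D)$, and that it is injective using the induced action on $K(\D)$.

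For well-definedness, I need only check that $(\Phi_S[1])(\Phi_T[1])(\Phi_S[1]) = (\Phi_T[1])(\Phi_S[1])(\Phi_T[1])$ in $\Aut^0(\D)$. Since $[1]$ is central in $\Aut(\D)$, this reduces directly to the Seidel--Thomas braid relation $\Phi_S\Phi_T\Phi_S = \Phi_T\Phi_S\Phi_T$ recalled earlier.

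For surjectivity, the key observation is that the image already contains the autoequivalences sending $\A^0$ to each of its four simple tilts: from the tilt proposition we have $(\Phi_S\Phi_T)[2] = (\Phi_S[1])(\Phi_T[1])$ and $(\Phi_T\Phi_S\Phi_T)[3] = (\Phi_T[1])(\Phi_S[1])(\Phi_T[1])$, and the left tilts are effected by the inverses. Given any vertex $\A$ of $\EG^0(\D)$, connectedness supplies a path $\A^0 = \A_0, \A_1, \ldots, \A_n = \A$ of simple tilts; I would argue by induction that the unique $g_i \in \Aut^0(\D)$ with $g_i(\A^0) = \A_i$ lies in the image. Since every heart in $\EG^0(\D)$ is isomorphic to $\A^0$, the passage from $\A_i = g_i(\A^0)$ to its simple tilt $\A_{i+1}$ has the form $\A_{i+1} = g_i h(\A^0)$ for one of the four tilt autoequivalences $h$, so $g_{i+1} = g_i h$ remains in the image. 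Together with the earlier proposition that $\Aut^0(\D)$ acts freely and transitively on $\EG^0(\D)$, this yields surjectivity.

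For injectivity, consider the composite $\Br_3 \to \Aut^0(\D) \to \GL(K(\D))$. Since $[1]$ acts as $-\Id$ on $K(\D)$ and $\Phi_S, \Phi_T$ act by the matrices recorded in this section, projecting to $\PSL(2,\Z)$ recovers the standard surjection $\Br_3 \to \PSL(2,\Z)$, whose kernel is the infinite cyclic centre $\langle(\sigma_1\sigma_2)^3\rangle$. Any element in the kernel of our map must therefore lie in this centre; but by the previous lemma $(\sigma_1\sigma_2)^3 = (\Phi_S[1]\Phi_T[1])^3 = (\Phi_S\Phi_T)^3[6] = [-5][6] = [1]$ in $\Aut^0(\D)$, and $[1]$ has infinite order there since it genuinely shifts hearts. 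Hence the restriction to the centre is injective, and so is the whole map. I expect the most delicate step to be the inductive bookkeeping in surjectivity: one must know that the local exchange-graph structure at any vertex $g(\A^0)$ is simply the $g$-translate of the local structure at $\A^0$, which the paper has already arranged by observing that all four simple tilts of $\A^0$ are themselves isomorphic to $\A^0$.
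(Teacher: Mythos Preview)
Your argument is correct. The well-definedness and surjectivity steps are essentially those of the paper, just packaged slightly differently: the paper records from the torsor property that $\Aut^0(\D)=\langle\Phi_S,\Phi_T,[1]\rangle$ and then observes $[1]=((\Phi_S[1])(\Phi_T[1]))^3$, while you instead show directly that the four simple-tilt autoequivalences lie in the image and run the induction along the exchange graph.

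Your injectivity argument, however, takes a genuinely different route. The paper works inside $\Sph(\D)\cong\Br_3$ via the word-length homomorphism: a relation among the $\Phi_S[1],\Phi_T[1]$ gives $\Phi_S^{n_1}\cdots\Phi_T^{n_k}=[-\sum n_i]$, the right-hand side is central hence equal to $((\Phi_S\Phi_T)^3)^m=[-5m]$, and comparing word lengths ($\sum n_i$ versus $6m$) against the shift degree ($\sum n_i=5m$) forces $\sum n_i=0$, so the relation already holds in $\Sph(\D)$. You instead factor through the action on $K(\D)$: the composite $\Br_3\to\PSL(2,\Z)$ is the standard surjection, so the kernel lies in the centre $\langle(\sigma_1\sigma_2)^3\rangle$, and that generator maps to $[1]$, which has infinite order on hearts. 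Both routes ultimately rest on the lemma $(\Phi_S\Phi_T)^3=[-5]$; yours trades the word-length bookkeeping for the structure of $\Br_3\to\PSL(2,\Z)$, which the paper has already recorded, and is arguably cleaner.

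One small point to make explicit: writing the composite as $\Br_3\to\Aut^0(\D)\to\GL(K(\D))$ presumes the second arrow is well defined, i.e.\ that an autoequivalence acting trivially on $\EG^0(\D)$ acts trivially on $K(\D)$. This holds---such an autoequivalence fixes $\A^0$, hence permutes $\{S,T\}$, and the asymmetry $\Ext^1(S,T)\cong\C$, $\Ext^1(T,S)=0$ rules out the swap---but it deserves a sentence.
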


\begin{proof}
As the exchange graph is an $\Aut^0(\D)$-torsor we know that $\Aut^0(\D) =\langle \Phi_S, \Phi_T, [1]\rangle$.  As the shift functor  commutes with the spherical twists, we find that $((\Phi_S [1])(\Phi_T [1]))^3 = [-5][6] = [1]$ so $\Aut^0(\D) = \langle \Phi_S [1], \Phi_T [1] \rangle$.  These two generators satisfy the braid relation as $\Phi_S, \Phi_T$ do.

Now consider a word $w$ in the generators $\Phi_S[1], \Phi_T[1]$ and their inverses which is equal to the identity of $\Aut^0(\D)$.  As $[1]$ is in the centre of $\Aut^0(\D)$, we have $\Phi_S^{n_1} \ldots \Phi_T^{n_k} = [-1]^{\sum n_i}$ in $\Sph(\D)$. By the above lemma the centre of $\Sph(\D)$ is generated by $(\Phi_S \Phi_T)^3 = [-5]$, so the right hand side is equal to $[-5]^{(\sum_i n_i)/5}$. As the braid relation is homogeneous, every element of $\Sph(\D)$ has a well-defined word length in the generators $\Phi_S$ and $\Phi_T$.  But applying the word length homomorphism gives $\sum_i n_i = \frac{6}{5} \sum_i n_i$ so $\sum_i n_i = 0$.  Thus the relations satisfied by the generators $\Phi_S [1], \Phi_T [1]$  of $\Aut^0(\D)$ are precisely those satisfied by the generators $\Phi_S, \Phi_T$ of $\Sph(\D)$.
\end{proof}

To complete the picture we show that $\Sph(\D)$ is a normal subgroup of index $5$.
\begin{prop}
There is a short exact sequence
\[
1 \to \Sph(\D) \to \Aut^0(\D) \to \Z/5\Z \to 1
\]
where the quotient map $l$ is the modulo $5$ word length map  in the generators $\Phi_S [1]$ and $\Phi_T [1]$
\end{prop}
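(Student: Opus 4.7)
The plan is to construct a surjective homomorphism $\bar l : \Aut^0(\D) \to \Z/5\Z$ whose kernel is precisely $\Sph(\D)$, and to identify $\bar l$ with the signed word length modulo $5$ in the generators $\Phi_S[1], \Phi_T[1]$.

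First I would verify that the signed word length map $l : \Aut^0(\D) \to \Z$ in the generators $\Phi_S[1], \Phi_T[1]$ is a well-defined group homomorphism. By the preceding theorem, $\Aut^0(\D)$ admits a presentation with these two generators subject only to the braid relation, which is homogeneous of length three on each side. Hence any two words representing the same element have equal signed length, and $l$ descends to $\bar l = l \bmod 5$.

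Next I would show $\Sph(\D) \subseteq \ker \bar l$. From $(\Phi_S \Phi_T)^3 = [-5]$ one has
\[
  \bigl((\Phi_S[1])(\Phi_T[1])\bigr)^3 \;=\; (\Phi_S \Phi_T)^3 \cdot [6] \;=\; [1],
\]
so $l([1]) = 6$. Writing $\Phi_S = (\Phi_S[1]) \cdot [-1]$ then gives $l(\Phi_S) = 1 - 6 = -5$, and identically $l(\Phi_T) = -5$, so $l$ takes values in $5\Z$ on $\Sph(\D) = \langle \Phi_S, \Phi_T \rangle$.

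For the reverse inclusion I would use that $[1]$ is central in $\Aut(\D)$, so every element of $\Aut^0(\D) = \langle \Phi_S, \Phi_T, [1] \rangle$ has the form $w \cdot [m]$ for some $w \in \Sph(\D)$ and $m \in \Z$. Then $\bar l(w[m]) = l(w) + 6m \equiv m \pmod{5}$ since $l(w) \in 5\Z$. If $\bar l(w[m]) = 0$ then $5 \mid m$; writing $m = -5k$ gives $[m] = ((\Phi_S \Phi_T)^3)^k \in \Sph(\D)$, so $w[m] \in \Sph(\D)$. Surjectivity of $\bar l$ is immediate from $\bar l(\Phi_S[1]) = 1$, and $\Sph(\D)$ is normal as the kernel of $\bar l$, yielding the claimed short exact sequence.

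The one delicate point is the initial well-definedness of $l$; once the homogeneity of the braid relation is invoked via the preceding theorem, the remainder is bookkeeping using the central identity $[1] = ((\Phi_S[1])(\Phi_T[1]))^3$ and the centrality of $[1]$ in $\Aut(\D)$.
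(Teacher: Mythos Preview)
Your proof is correct and follows essentially the same approach as the paper. The paper's argument is more compressed---it writes $l$ for the mod~$5$ map directly and, for the reverse inclusion, simply observes that the smallest power of $[1]$ lying in the kernel is $[5] = (\Phi_S\Phi_T)^{-3} \in \Sph(\D)$---but the underlying logic (well-definedness from the homogeneous braid relation, $l(\Phi_S) \equiv 0$, and decomposing a general element as $w\cdot[m]$ via centrality of $[1]$) is identical to yours.
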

\begin{proof}
$\Sph(\D)$ is in the kernel of $l$ as
\[
l(\Phi_S) = l(\Phi_S[1]) - l([1]) = 1-6 = 0
\]
Conversely the smallest power of $[1]$ in the kernel is $[5]= (\Phi_S \Phi_T)^{-3} \in \Sph(\D)$ and so as $\Aut^0(\D) = \langle \Phi_S, \Phi_T , [1] \rangle$ the kernel is contained in $\Sph(\D)$.
\end{proof}

\begin{rmk} 
 By Sabidussi's Theorem \cite[Thm 4]{Sab}, $\EG^0(\D)$ is isomorphic to the Cayley graph of the braid group $\Br_3$ with respect to the generators $\Delta = (\Phi_T \Phi_S \Phi_T) [3]$ and $\Sigma=(\Phi_S \Phi_T) [2]$ which give the simple tilted hearts.  Indeed this gives an alternative presentation of $\Br_3$ \cite[Sect 1.14]{KT}
\[
\langle \Sigma, \Delta \: | \: \Sigma^3 = \Delta^2 \rangle
\]
The quotient of $\EG^0(\D)$ by $\Sph(\D)$ is the $A_2$ cluster exchange graph which is isomorphic to the Cayley graph of $\Z/{5\Z}$.  This recovers a special case of a result of Keller and Nicolas \cite[Thm 5.6]{K}.
\end{rmk}

\section{Stability conditions}

In this section we prove Theorem \ref{stab}.  We derive the Picard-Fuchs equations satisfied by the periods of the family of meromorphic differentials $\lambda$ on the fibres $E$ of the universal family of framed elliptic curves $\E \to \widetilde{\M}$.  Identifying the lattices $H_1(E, \Z) \isom K(\D)$, the image in $\Pb\Hom(K(\D), \C)$ of a certain branch of the period map is a double of the Schwarz triangle with angles $(\pi, \pi/3, \pi/2)$.  We show that this coincides with the image under the local homeomorphism $\bar{Z}: \Stab^0(\D) / \C \to \Pb \Hom(K(\D), \C)$ of a fundamental domain for the action of $\Aut^0(\D)/ \Z \isom \PSL(2, \Z)$ on $\Stab^0(\D) / \C$.  We use our understanding of the exchange graph of $\D$ to lift the period map to our desired biholomorphism $f: \widetilde{\M} \to \Stab^0(\D) / \C$.

\begin{defn}
On an elliptic curve $y^2 = z^3 + az+ b$ define the meromorphic differential $\lambda = y \: dz$
\end{defn}
This has a pole of order $6$ at the point at infinity and double zeroes at each of the three other branch points of $y$.  This is the divisor of the function $y^2$.  It is the unique differential up to scale with this property as the above divisor has degree zero.

Define the coordinates $j$ and $u$ on $\widetilde{\M}$ by 
\begin{equation} \label{u}
 J = 1728/j \qquad j = 4u(1-u)
\end{equation}
where $J$ denotes the usual $J$-invariant.  We note that the family of differentials $\lambda= \sqrt{z^3 -3z + (4u - 2)} \: dz$ satisfy $2 \del_u \lambda = \omega$, where $\omega = dz/y$ is the family of holomorphic differentials on $\widetilde{\M}$.  Using this we show that the periods of $\lambda$ satisfy hypergeometric equations in $u$ and $j$.
\begin{defn}
A hypergeometric differential equation is a second order ordinary differential equation on $\Pb^1$ of the form
\[
 w(1-w) f'' + (\gamma - (\alpha + \beta - 1) w )f' - \alpha \beta w = 0
\]
with $\alpha, \beta, \gamma \in \R$.  
\end{defn}

It has regular singularities at $0$, $\infty$ and $1$ with exponents
\[
\lambda = 1 - \gamma \qquad \mu = \alpha - \beta \qquad \nu = \gamma - \alpha - \beta
\]

\begin{lemma}
 The periods of $\lambda$ satisfy the hypergeometric equation in $j$ with exponents $(1,\frac{1}{3},\frac{1}{2})$
\end{lemma}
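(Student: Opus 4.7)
The plan is to derive the Picard--Fuchs ODE satisfied by the periods $\Pi(u) = \int_\gamma \lambda$ by Griffiths--Dwork reduction and then convert to the variable $j$.  From $y^2 = z^3 - 3z + (4u-2)$ one has $\partial_u y = 2/y$, so $\Pi'(u) = 2 \int_\gamma dz/y$ and $\Pi''(u) = -4 \int_\gamma dz/y^3$.  The main task is to express $\int_\gamma dz/y^3$ linearly in $\int_\gamma dz/y$ and $\Pi$.  Writing $I_k = \int_\gamma z^k\, dz/y$ and $J_k = \int_\gamma z^k\, dz/y^3$, the vanishing of exact forms $d(P(z)/y)$ on the smooth fibre yields $\int_\gamma (2P'f - Pf')/y^3\, dz = 0$ with $f := y^2$; the choices $P = 1, z, z^2$ give $J_2 = J_0$, $J_3 = J_1 + \tfrac{2}{3} I_0$, and $J_4 = J_0 + \tfrac{4}{3} I_1$.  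Combining these with the identity $J_{k+3} - 3 J_{k+1} + (4u-2) J_k = I_k$ (coming from $f/y^3 = 1/y$) at $k = 0, 1$ produces
\[
24\, u(u-1)\, J_0 = (2u-1) I_0 - I_1.
\]
Dually, $d(zy) = 0$ gives $5 I_3 - 9 I_1 + (8u-4) I_0 = 0$, which together with the identity $\Pi = I_3 - 3 I_1 + (4u-2) I_0$ (from $y = f/y$) yields $(2u-1) I_0 - I_1 = \tfrac{5}{6}\Pi$.  Hence $J_0 = -5\Pi/(36 j)$ with $j = 4u(1-u)$, and $\Pi$ satisfies $\Pi''(u) = 5\Pi(u)/(9\, j(u))$.

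Second, I would change variable from $u$ to $j$.  With $j' = 4(1-2u)$, $j'' = -8$, and the identity $(1-2u)^2 = 1 - j$, the chain rule gives $\Pi''(u) = 16(1-j)\tilde\Pi''(j) - 8\tilde\Pi'(j)$ where $\tilde\Pi(j) = \Pi(u)$.  Substituting yields
\[
j(1-j)\tilde\Pi''(j) - \tfrac{1}{2} j\, \tilde\Pi'(j) - \tfrac{5}{144}\, \tilde\Pi(j) = 0,
\]
which is of hypergeometric form with $\gamma = 0$, $\alpha + \beta = -\tfrac{1}{2}$ and $\alpha\beta = \tfrac{5}{144}$, so $\{\alpha, \beta\} = \{-\tfrac{1}{12}, -\tfrac{5}{12}\}$.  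The exponents $(1-\gamma,\, \alpha - \beta,\, \gamma - \alpha - \beta) = (1,\, \tfrac{1}{3},\, \tfrac{1}{2})$ at $(0, \infty, 1)$ are as claimed.

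The main obstacle will be the Griffiths--Dwork bookkeeping: choosing the correct test polynomials $P$ and carefully tracking signs and factors of $2$ when assembling the linear relations among the $I_k$, $J_k$, and $\Pi$.  Once the ODE $\Pi'' = 5\Pi/(9j)$ in $u$ is in hand, the passage to hypergeometric form in $j$ is essentially forced.  Conceptually, the exponents $(1, \tfrac{1}{3}, \tfrac{1}{2})$ reflect the $\PSL(2,\Z)$-orbifold structure of the base: a cusp at $j = 0$ (the nodal degeneration $J = \infty$, with logarithmic monodromy), and elliptic points of orders $3$ and $2$ at $j = \infty$ ($J = 0$) and $j = 1$ ($J = 1728$), giving exponent differences $1/3$ and $1/2$.
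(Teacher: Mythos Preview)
Your argument is correct and complete: the Griffiths--Dwork reduction yields $36\,u(1-u)\,\Pi'' = 5\Pi$, and the change of variable $j = 4u(1-u)$ then gives the hypergeometric equation in $j$ with $(\alpha,\beta,\gamma) = (-\tfrac{1}{12},-\tfrac{5}{12},0)$ and exponents $(1,\tfrac{1}{3},\tfrac{1}{2})$ as claimed.

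The paper takes a genuinely different route. Rather than computing the Picard--Fuchs operator for $\lambda$ from scratch, it exploits the relation $2\,\partial_u\lambda = \omega$ together with the \emph{known} hypergeometric equation for the periods of the holomorphic differential $\omega$ (exponents $(0,\tfrac{1}{3},\tfrac{1}{2})$ in $j$). Differentiating a hypergeometric equation in the dependent variable shifts $(\alpha,\beta,\gamma)$ to $(\alpha+1,\beta+1,\gamma+1)$, so the paper reads off the parameters for $\lambda$ from those for $\omega$, passing through the quadratic substitution $j = 4u(1-u)$ in both directions. Your approach is self-contained and avoids quoting the classical Picard--Fuchs equation for $\omega$, at the cost of the $I_k$/$J_k$ bookkeeping; the paper's approach is shorter if one is willing to import that classical fact, and it makes transparent \emph{why} the exponents of $\lambda$ differ from those of $\omega$ by exactly $1$ at $j=0$ (reflecting one antiderivative in $u$).
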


\begin{proof}
 Suppose the periods of $\lambda$ satisfy the hypergeometric equation in $u$
\[
 u(1-u) \del^2_u f + (\gamma - (\alpha + \beta - 1) j ) \del_u f - \alpha \beta f = 0
\]
Taking the derivative with respect to the dependent variable $u$, we find that the periods of $\omega$ must satisfy 
\[
  u(1-u) \del^2_u f + (1-2u)\del_u f + (\gamma - (\alpha + \beta - 1) u )\del_u f -(\alpha +\beta-1) f - \alpha \beta f = 0
\]
which is hypergeometric of the form
\[
  u(1-u) \del^2_u f + ((\gamma+1) - ((\alpha+1) + (\beta+1) - 1) u ) \del_u f - (\alpha+1) (\beta+1) f = 0
\]

It is well-known the periods of $\omega$ satisfy the hypergeometric equation in $j$ with exponents $(0, \frac{1}{3}, \frac{1}{2})$.  Then by the quadratic transformation law for the change of variable given above \cite[Eq 2]{V}, they satisfy the hypergeometric equation in $u$ with exponents $(0, \frac{1}{3}, 0)$.  By the above computation, the periods of $\lambda$ satisfy the hypergeometric equation with exponents $(1, \frac{1}{3},1)$ and so reversing the change of variable gives the result.
\end{proof}

\begin{rmk}
The coordinate transformation (\ref{u}) defines a double cover $B \to M_{1,1}$ of the coarse moduli space of elliptic curves.  There is a family of elliptic curves on $B$ whose total space is the complement of the three singular fibres of types $(I_1, I_1, II^*)$ over $u=0,1$ and $\infty$ respectively of a rational elliptic surface $\Sigma_u \to \Pb^1_u$.  This is the smooth part of Hitchin's fibration of the moduli space of meromorphic $\SU(2)$-Higgs bundles on $\Pb^1_z$ with a single pole of order $4$ at $z =\infty$ whose leading term is nilpotent.  The meromorphic differential $\lambda$ is the Seiberg-Witten differential of this integrable system, that is the exterior derivative of $\lambda$ defines a holomorphic symplectic form on $\Sigma$.

In fact $\Sigma$ is a hyperk\"{a}hler manifold \cite{W}, which was studied in \cite[Sect 9.3.3]{GMN}.  In another complex structure $\Sigma$ is isomorphic to the moduli space of flat $\SL(2, \C)$-connections on $\Pb^1_z$ with a single pole at $z= \infty$ of Katz invariant $5/2$.  This complex manifold was studied in \cite{S,vdPS} as the moduli space of initial conditions of the first Painlev\'{e} equation (cf Remark \ref{painleve}).  Its image under the Riemann-Hilbert map is an affine cubic surface which is isomorphic as a complex variety to the cluster algebra of $A_2$.
\end{rmk}

Now consider the moduli space of elliptic curves $\M \isom \Pb(2,3) \less \{ \circ\}$ where $\circ$ is the point corresponding to $j= 0$.  We make branch cuts on $\M$ along the line $\Im(j) = 0$ between $\circ$ and each of the $\Z_2$ and $\Z_3$ orbifold points $\times$, $*$ at $j=1, \infty$.  We deduce the image of this branch of the period map $p$ of $\lambda$ from the Schwarz triangle theorem.
\begin{thm} \cite[p 206]{N}
 Suppose $f_1$, $f_2$ are linearly independent solutions to the hypergeometric equation with exponents $(\lambda, \mu, \nu)$.  Suppose further that their ratio $s= f_1/ {f_2}$ restricted to the upper-half plane $\h \subset \Pb^1 \less \{0, \infty, 1\}$ is an injection.  Then $s$ maps $\h$ biholomorphically onto the interior of a curvilinear triangle $\Delta_{\lambda, \mu, \nu}$ of angles $(\lambda \pi, \mu \pi, \nu \pi)$.
\end{thm}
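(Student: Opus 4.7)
The plan is to combine local Frobenius analysis at the three regular singular points $0, 1, \infty$ with the Schwarz reflection principle and an argument-principle calculation. Note first that by the injectivity hypothesis $s: \h \to \Pb^1$ takes the value $\infty$ at most once, so by a preliminary M\"obius change of basis $(f_1,f_2) \mapsto (af_1+bf_2, cf_1+df_2)$ we may assume $f_2$ has no zeros on $\h$ and $s$ is holomorphic there.

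The first step is to describe $s$ near each singular point. Near $w=0$ the indicial equation has roots $0$ and $1-\gamma = \lambda$, and for generic $\lambda$ there is a local basis of solutions $g_1(w) = w^\lambda h_1(w)$, $g_2(w) = h_2(w)$ with $h_1, h_2$ holomorphic and nonvanishing at $0$. Since $(f_1, f_2)$ is related to $(g_1, g_2)$ by a $\GL(2,\C)$ change of basis, $s$ differs from $g_1/g_2 = w^\lambda h_1/h_2$ by a M\"obius transformation of the target; up to this change of target it behaves like $w^\lambda$, which maps a small upper half-disc at $0$ conformally onto a circular sector of opening angle $\lambda \pi$. Identical arguments at $w=1$ and $w=\infty$, after pulling those points to $0$ by M\"obius transformations of the domain, yield local sectors of openings $\nu\pi$ and $\mu\pi$ respectively.

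The second step is to control the boundary behaviour of $s$ on each open interval $I \in \{(-\infty,0), (0,1), (1,\infty)\}$. On each such $I$ the hypergeometric equation has real coefficients, hence admits a real-valued basis of solutions there. Expressing $f_1, f_2$ as $\C$-linear combinations of that basis on $I$, the ratio $s|_I$ is the image of $\R \cup \{\infty\}$ under a M\"obius transformation, that is, a generalised circle. Therefore $s(I)$ lies on a circular arc, and by the Schwarz reflection principle $s$ extends analytically across $I$; the extension to the lower half plane maps onto the reflection of the image of $\h$ across that arc.

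Assembling the three local pictures, $s$ extends continuously to $\overline{\h}$ and sends $\del \h$ to a closed Jordan curve composed of three circular arcs meeting at vertices with interior angles $\lambda\pi, \nu\pi, \mu\pi$, which is by definition $\del \Delta_{\lambda,\mu,\nu}$. Since $s$ is injective on $\h$ it has nowhere vanishing derivative there, and since it is a homeomorphism of $\del \h$ onto $\del \Delta_{\lambda,\mu,\nu}$, the argument principle forces $s$ to be a biholomorphism from $\h$ onto the interior of $\Delta_{\lambda,\mu,\nu}$. I expect the main technical obstacle to be the exceptional exponent values where some $\lambda, \mu, \nu$ is an integer, so that one Frobenius solution acquires a $\log$ term and $s$ is not locally $w^\lambda$ up to M\"obius equivalence; in such cases the injectivity assumption on $s$ must be used to rule out a logarithmic singularity or the argument must be carried out case-by-case.
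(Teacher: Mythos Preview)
The paper does not prove this statement at all: it is quoted as a classical result with the citation \cite[p 206]{N} (Nehari's \emph{Conformal Mapping}) and used as a black box to identify the image of the period map. So there is no ``paper's own proof'' to compare against; your sketch is essentially the standard argument one finds in Nehari or Ahlfors, combining Frobenius local analysis, the reality of the equation on the real intervals, and Schwarz reflection.

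Two remarks on your write-up. First, the step ``$s$ sends $\partial\h$ to a closed Jordan curve'' and ``is a homeomorphism of $\partial\h$ onto $\partial\Delta_{\lambda,\mu,\nu}$'' is asserted rather than proved: injectivity on $\h$ does not by itself give injectivity on the boundary, and one usually argues via monotonicity of $s$ on each real interval (using that $s' = W(f_1,f_2)/f_2^2$ has constant sign there) together with the local sector picture at the vertices. Second, and more importantly for the paper's application, the relevant exponent triple is $(1,\tfrac13,\tfrac12)$, so $\lambda=1$ is precisely the integer case you flag as exceptional. Here there is in general a logarithmic solution at $w=0$, and the ``angle $\pi$'' vertex means the two adjacent arcs meet tangentially rather than forming a genuine corner; the injectivity hypothesis is what rules out the map degenerating there. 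Since this is the case actually used, you would want to spell it out rather than defer it.
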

The image is determined up to a M\"{o}bius map and so specified uniquely by the positions of the three vertices of the triangle $\Delta$.  By the Schwarz reflection principle we have
\begin{cor}
 The image $\lozenge = p(\M)$ is the double of the curvilinear triangle $\Delta_{1, \frac{1}{3}, \frac{1}{2}}$ along the edge connecting the image of the two orbifold points $\times$ and $*$.
\end{cor}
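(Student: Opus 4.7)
The plan is to combine the Schwarz triangle theorem above with the Schwarz reflection principle, exploiting the way the two branch cuts decompose $\M$ into upper and lower half-plane regions glued along the single uncut arc $(1, \infty)$ of the real $j$-axis. After these cuts the resulting domain in $\Pb^1_j \less \{0, 1, \infty\}$ is simply connected, so a chosen branch of the period map $p$ of $\lambda$ extends single-valuedly over it.

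First, apply the Schwarz triangle theorem to the restriction of $p$ to $\h$. Viewed projectively as the ratio of two linearly independent solutions to the hypergeometric equation with exponents $(1, \frac{1}{3}, \frac{1}{2})$, the map $p$ is injective on $\h$, so the theorem identifies $p(\h)$ with the interior of $\Delta_{1, \frac{1}{3}, \frac{1}{2}}$. Its three vertices are the images of $j = 0, 1, \infty$ with angles $\pi$, $\pi/2$, $\pi/3$, corresponding to $\circ$, $\times$, $*$ respectively.

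Next, extend $p$ analytically across the uncut arc $(1, \infty)$ by Schwarz reflection. After normalising the period basis so that both periods are real on this segment, $p$ maps $(1, \infty)$ into the geodesic edge of $\Delta$ joining the images of $\times$ and $*$, and the reflection principle identifies the image of the lower half-plane under the extension with the reflection of $\Delta$ across this same edge. Gluing the two halves along the shared edge produces precisely the double $\lozenge$.

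The main technical point is verifying the two hypotheses just invoked: the injectivity of $p$ on $\h$ needed for the Schwarz triangle theorem, and the reality of the chosen periods on the uncut arc. Both can be handled by tracking the monodromy of the cycle basis $\{\alpha, \beta\}$ around the branch points $j = 0, 1, \infty$ and leveraging the analogous classical statements for the period map of the holomorphic differential $\omega$.
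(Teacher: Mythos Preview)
Your proposal is correct and follows the same approach as the paper: apply the Schwarz triangle theorem to the upper half-plane to obtain $\Delta_{1,\frac{1}{3},\frac{1}{2}}$, then invoke the Schwarz reflection principle across the uncut real arc joining $\times$ and $*$ to double the triangle. The paper's own argument is in fact just the one-line remark ``By the Schwarz reflection principle'' preceding the corollary, so your write-up is a more detailed version of exactly the intended proof.
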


We now define a fundamental domain $V=V(\Abar^0)$ for the action of $\Aut^0(\D) / \Z$ on $\Stab^0(\D) / \C$ which maps bijectively under the local homeomorphism $\bar{Z}$ to $\lozenge$.  Although the vertices of the quotient of the exchange graph $\overline{\EG^0}(\D) = \EG^0(\D) / {\Z[1]}$ are indeed an $\Aut^0(\D)/ \Z$-torsor, the notion of a projective stability condition $\sigmabar \in \Stab^0(\D)/\C$ being supported at a given vertex $\Abar$ of $\overline{\EG^0}(\D)$ is not a priori well-defined.  This is because points of $\Stab^0(\D)$ in the same $\C$-orbit can be supported on different hearts, even modulo the shift functor.   We \emph{define} $\sigmabar$ to be supported on $\Abar$ using the following width function. 

\begin{defn}
 Define the width $\varphi$ of a stability condition $\sigma = (Z, \A) \in \Stab(\D)$ 
\[
 \varphi(\sigma) =  \phi^+(\sigma) - \phi^-(\sigma) 
\]
 where $\phi^+(\sigma)$ and $\phi^-(\sigma)$ denote the maximal and minimal phases respectively of an object in $\A$.
\end{defn}

\noindent The width is the angle of the image under $Z$ of the cone $C(\A) \subset K(\A)$ generated by classes of objects in $\A$.

\begin{defn}
We say that $\sigmabar \in \Stab^0(\D) / \C$ is supported on $\Abar$ if the width function is minimised on a lift $\A$ of $\Abar$.
\end{defn}

\noindent Note that $\sigmabar$ is supported on more than one $\Abar$ where the width function is minimised on more than one such $\Abar$.  We will write $V(\Abar) \subset \Stab^0(\D) / \C$ for the subset supported uniquely on $\Abar$, whose closure $\bar{V}(\Abar)$ is the subset supported on $\Abar$.

\begin{prop}
 $V = V(\Abar^0)$ is the interior of a fundamental domain for the action on $\Aut^0(\D) / \Z$ on $\Stab^0(\D) / \C$
\end{prop}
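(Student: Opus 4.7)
The plan is to verify the three defining properties of the interior of a fundamental domain for the action of $\Aut^0(\D)/\Z$ on $\Stab^0(\D)/\C$: openness of $V$, coverage of $\Stab^0(\D)/\C$ by the closed translates $g\bar V(\Abar^0)$, and disjointness of the open translates $gV$ for $g \neq 1$.

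Equivariance and disjointness follow directly from the definitions. The width function is equivariant in the sense that an autoequivalence $g$ sends $(\sigmabar, \Abar)$ to $(g\sigmabar, g\Abar)$ preserving phases, so $gV(\Abar^0) = V(g\Abar^0)$. Since $\overline{\EG^0}(\D)$ is a torsor under $\Aut^0(\D)/\Z$, one has $g\Abar^0 \neq \Abar^0$ whenever $g \neq 1$, and the uniqueness clause in the definition of $V$ then forces $V \cap gV = \emptyset$. For openness, I would use that for each fixed $\Abar$ the width $\sigmabar \mapsto \varphi$ is continuous in $\sigmabar$, since the phases of the three indecomposable objects of an $A_2$-heart depend continuously on $Z$. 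At a point of strict minimization at $\Abar^0$ this strict inequality persists in a small neighbourhood; local finiteness of the exchange graph, which restricts competing nearby minimizers to finitely many hearts, upgrades this pointwise observation to openness.

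The main obstacle is the covering step: showing that the infimum of the width over $\overline{\EG^0}(\D)$ is attained for every $\sigmabar \in \Stab^0(\D)/\C$. The key mechanism is that a simple tilt of $\Abar$ at a simple whose $\sigma$-phase is strictly extremal among all HN phases of objects of $\A$ strictly reduces the width, so at a minimizer no such strict descent is possible. Starting from an arbitrary heart, one iteratively tilts at such extremal simples and a termination argument then yields a minimizer. To rule out infinite descent I would use either a direct bound on the number of required tilts in terms of the initial width, exploiting the explicit form of each heart as $\Mod(\C A_2)$, or appeal to the local homeomorphism $\bar Z$ together with the identification of $\bar Z(V)$ with the interior of the Schwarz triangle double $\lozenge$ developed in the preceding paragraphs, and the tiling of $\Pb\Hom(K(\D), \C)$ by $\PSL(2,\Z) \isom \Aut^0(\D)/\Z$-translates of $\lozenge$.
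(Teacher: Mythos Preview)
Your disjointness argument (torsor property plus the uniqueness clause in the definition of $V$) and your openness argument (continuity of the finitely many phases plus local finiteness of competing hearts) are correct and match what the paper needs. The divergence from the paper, and the gap, is in the covering step.

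The claim that a simple tilt at a simple of strictly extremal phase strictly reduces the width is false. Take $\A^0=(T,S)_E$ with $Z(S)=e^{0.6\pi i}$, $Z(T)=e^{0.4\pi i}$, so $\phi(T)=0.4$, $\phi(E)=0.5$, $\phi(S)=0.6$ and the width is $0.2$. Both simples have strictly extremal phase. Left-tilting at $S$ gives $L_S(\A^0)=(S[-1],E)_T$ with phases $-0.4,\,0.4,\,0.5$ and width $0.9$; right-tilting at $T$ gives $R_T(\A^0)=(E,T[1])_S$ with phases $0.5,\,0.6,\,1.4$ and width $0.9$. So your descent step can increase the width, and ``no simple is strictly extremal'' is not the correct stopping criterion. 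The true statement is only that at a non-minimizing compatible heart \emph{some} adjacent compatible heart has strictly smaller width; this is weaker than what you assert and is not what you argue. Your alternative fallback via the Schwarz tiling is circular in the paper's order: the identification $\bar Z(V)=\lozenge$ is the proposition proved immediately \emph{after} this one.

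The paper avoids any descent argument. It observes that the boundary $\bar V\setminus V$ consists of three explicit walls along which $\sigmabar$ is simultaneously supported on an adjacent $\Abar'$, and that these walls are locally finite. Hence $\bar T=\bigcup_{\Abar}\bar V(\Abar)$ is open; being also closed and nonempty, it equals the connected component $\Stab^0(\D)/\C$, which is exactly the covering statement. An even shorter route, which neither you nor the paper spells out, is that for a fixed $\sigmabar$ the compatible hearts are precisely the $\Pc(\theta,\theta+1]$ modulo shift, of which there are at most three (one per stable phase in $(0,1]$), so the minimum of the width over $\overline{\EG^0}(\D)$ is trivially attained.
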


\begin{proof}
 As the vertices of $\overline{\EG}^0(\D)$ are an $\Aut^0(\D)/ \Z$-torsor, every point in the set $T=\coprod_{\Abar} V(\Abar)$  belongs to a unique $V(\Abar)$.  The points $\sigmabar$ in $\bar{V} \less V$ lie on the three codimension 1 walls pictured below where $\sigmabar$ is also supported on some  other $\Abar$ for some simple tilt $\A$ of $\A^0$.    These walls of the $V(\Abar)$ are locally finite as there is only one other wall intersecting $\bar{V}$, namely $\bar{V}(L_S(\Abar^0)) \cap \bar{V}(R_T(\Abar^0))$.  Thus the closure $\bar{T} = \coprod_{\Abar} \bar{V}(\Abar)$.  But $\bar{T}$ is clearly open and so is the entire connected component $\Stab^0(\D)/ \C$. 
\end{proof}

\begin{rmk} The above proof shows that an autoequivalence $\Phi$ which preserves $\overline{\EG}^0(\D)$ preserves the connected component $\Stab^0(\D)/ \C$.  Also if $\Phi$ acts trivially on $\overline{\EG}^0(\D)$ then $\Phi$ fixes the central charge $\bar{Z}$ and so $\Phi$ acts trivially on $\Stab^0(\D)/ \C$. 
\end{rmk}

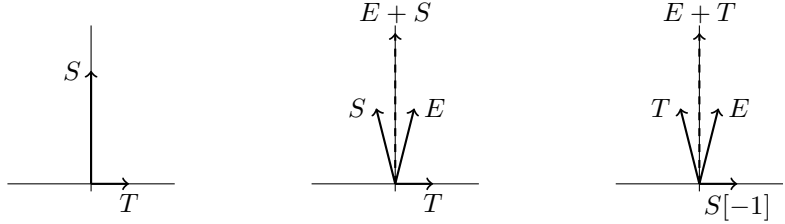
\begin{figure}[h]
 \begin{center}
\begin{tikzpicture}
 \draw (-1.1,0) -- (1.1,0);
 \draw (0,-0.1) -- (0,2.1);
 \draw [->,thick](0,0) -- (0,1.5) node [anchor=east]{$S$};
 \draw [->,thick] (0,0) -- (0.5,0) node [anchor = north]{$T$};

 \draw (2.9,0) -- (5.1,0);
 \draw (4,-0.1) -- (4,2.1);
 \draw [->,thick](4,0) -- (4.25,1) node [anchor=west]{$E$};
 \draw [->,thick](4,0) -- (3.75,1) node [anchor=east]{$S$};
 \draw [->,thick] (4,0) -- (4.5,0) node [anchor = north]{$T$};
 \draw [->,thick, dashed](4,0) -- (4,2) node [anchor=south]{$E+S$};

 \draw (6.9,0) -- (9.1,0);
 \draw (8,-0.1) -- (8,2.1);
 \draw [->,thick](8,0) -- (8.25,1) node [anchor=west]{$E$};
  \draw [->,thick](8,0) -- (7.75,1) node [anchor=east]{$T$};
 \draw [->,thick] (8,0) -- (8.5,0) node [anchor = north]{$S[-1]$};
\draw [->,thick, dashed](8,0) -- (8,2) node [anchor=south]{$E+T$};

\end{tikzpicture}
\end{center}
\caption{Typical stability conditions on the three boundary components of $V(\bar{\A^0})$.  The first $\bar{V}(\Abar^0) \cap \bar{V}(R_S(\Abar^0)) = \bar{V}(\Abar^0) \cap \bar{V}(L_T(\Abar^0))$ occurs where the only stable objects are $S$ and $T$.  The other two $\bar{V}(\Abar^0) \cap \bar{V}(R_T(\Abar^0))$ and $\bar{V}(\Abar^0) \cap \bar{V}(R_T(\Abar^0))$ lie in the region where $S, T$ and $E$ are stable.}
\end{figure}
\noindent This means that $\Stab^0(\D)/ \C$ is glued together from the $V(\Abar)$ according to the quotient of the exchange graph $\overline{\EG}^0(\D)$ just as $\Stab^0(\D)$ is glued from the $U(\A)$ according to $\EG^0(\D)$.
\begin{prop}  The image of $V$ under the map $\bar{Z}$ is $\lozenge$.
\end{prop}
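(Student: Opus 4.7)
The plan is to identify $\bar{Z}(V)$ by characterizing its three boundary walls in the projective coordinate on $\Pb\Hom(K(\D),\C)$ and showing they coincide with the boundary arcs of the doubled Schwarz triangle $\lozenge$.

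First I would show that $\bar{Z}|_V$ is injective: since $V \subset U(\bar{\A}^0)$ and a stability condition supported on the standard heart $\A^0$ is determined by its central charge up to the $\C$-action, the projective charge $[Z(S):Z(T)]$ already encodes the underlying point in $V$. It therefore suffices to compute the image. Working in the affine coordinate $u = Z(S)/Z(T) \in \CP^1$, the image $\bar{Z}(U(\bar{\A}^0))$ is the open disk $\CP^1 \setminus \R_{\leq 0}$, into which $\bar{Z}(V)$ embeds.

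Next I would use the exchange-graph picture to identify the three walls of $V$. In the quotient $\overline{\EG^0}(\D)$ the vertex $\bar{\A}^0$ has exactly three neighbours, since the two simple tilts $R_S(\A^0)$ and $L_T(\A^0)$ coincide modulo the shift $[1]$. For each neighbour the equal-width locus is an explicit condition on $u$: on the triple wall shared with $\overline{R_S(\A^0)}=\overline{L_T(\A^0)}$ only $S$ and $T$ are stable (the extension $E$ becomes strictly semistable, so $Z(S)$ and $Z(T)$ lie on a common ray), while on the walls shared with $\overline{R_T(\A^0)}$ and $\overline{L_S(\A^0)}$ all three of $S,T,E$ are stable but one of the equal-phase relations $\phi(E)=\phi(T)$ or $\phi(E)=\phi(S)$ holds. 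Each wall is then an arc of a circle in $\CP^1$ determined by a linear relation amongst $Z(S),Z(T),Z(S)+Z(T)$, and the three arcs meet at the single most-symmetric $\sigmabar$ where $Z(S), Z(T), Z(E)$ are all collinear.

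Finally I would match these arcs to $\partial\lozenge$. Doubling $\Delta_{1,1/3,1/2}$ along the edge joining its angle-$\pi/3$ and angle-$\pi/2$ vertices produces a region with one $2\pi/3$ corner (the doubled $\pi/3$ vertex), a smooth boundary point where the $\pi/2$ vertex becomes angle $\pi$, and two mirror-image smooth $\pi$-angle boundary points coming from the $\circ$-vertex and its reflection. Using the $\PSL(2,\Z)$-equivariance of both $\bar{Z}$ and the period map --- which on each side is determined by the common action on $K(\D)$ --- the three arcs bounding $\bar{Z}(V)$ must map onto the three arcs bounding $\lozenge$ once the symmetric corner of $V$ is identified with the $2\pi/3$ corner of $\lozenge$. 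The main obstacle is this explicit arc-matching: given the corner correspondence, the remaining verification reduces to a Schwarz-reflection argument across the triple wall together with checking agreement at one further boundary point, which can be done using the hypergeometric exponents $(1,\tfrac13,\tfrac12)$ established in the preceding lemma.
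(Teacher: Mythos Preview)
Your identification of the three boundary walls of $V$ is incorrect, and this is the crux of the argument. The walls of $V(\bar\A^0)$ are defined via the width function: they are the loci where the minimal width is achieved simultaneously on $\bar\A^0$ and on one of its neighbours in $\overline{\EG^0}(\D)$. This is an \emph{equal-cone-angle} condition, not a marginal-stability condition. For instance, the wall shared with $\overline{R_S(\A^0)} = \overline{L_T(\A^0)}$ is where the cone spanned by $Z(S), Z(T)$ has the same opening angle as the cone spanned by $-Z(S), Z(T)$; this forces $Z(S) \perp Z(T)$, not $Z(S)$ and $Z(T)$ on a common ray. The locus you describe --- where $E$ becomes strictly semistable and $Z(S), Z(T)$ are proportional --- is the entire real axis $\bar Z \in \R \subset \Pb^1$, i.e.\ the gluing locus between the two discs $\h^\pm$ in Figure~2, not a wall of $V$. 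Likewise the conditions $\phi(E)=\phi(T)$ and $\phi(E)=\phi(S)$ both reduce to $Z(S) \parallel Z(T)$ and so do not describe the other two walls either; those are the equal-width conditions comparing $\A^0$ with the hearts having simples $\{E, T[1]\}$ and $\{S[-1], E\}$ respectively. Consequently your claim that all three arcs meet at a single ``most symmetric'' collinear point is also wrong: only two of the walls meet, at the $\Z/3$-fixed point $*$ in $\h^+$, while the third lies entirely in the opposite half-plane $\h^-$ and shares no interior point with them.

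The paper's own proof is essentially a direct inspection: once the three walls are correctly computed as equal-width loci (as depicted in Figures~1 and~2), one reads off that their image under $\bar Z$ bounds a region with a $\pi$-angle at $\times$ and a $2\pi/3$-angle at $*$, which is exactly $\lozenge$. Your overall strategy --- determine $\bar Z(V)$ by locating its boundary --- is the same as the paper's; the gap is solely in the computation of what that boundary actually is.
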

\begin{proof}  The boundary of $V$ consists of stability conditions supported on one of the three walls which we picture below, whose image under $\bar{Z}$ is the boundary of $\lozenge$.
\end{proof}

\begin{figure}[h]
\begin{center}
\begin{tikzpicture}
\pgfmathparse{{2}}\global\let\r\pgfmathresult

\draw (0,0)  circle (\r cm) ;
\pgfmathparse{{\r*tan(67.5)}}\global\let\rt\pgfmathresult
\pgfmathparse{(\rt - sqrt(\rt^2-\r^2))}\global\let\x\pgfmathresult

\begin{scope}
\clip (0,0) circle (\r cm) ;
\fill[lightgray] (0,0) circle (\r cm) ;

\fill[white] (\rt, \r) circle (\rt cm) ;
\fill[white] (\rt, -\r) circle (\rt cm) ;
\draw (0,0) circle (\r cm) ;
\draw [dotted](-\r,0) -- (\x,0);
\draw [densely dashed](\r,0) -- (\x,0);

\begin{scope}
\clip  (\x, -\r) rectangle (\r, \r);
\draw [dotted](\rt, \r) circle (\rt cm) ;
\draw [dotted](\rt, -\r) circle (\rt cm) ;
\end{scope}

\begin{scope}
\clip  (-\r, -\r) rectangle (\x, \r);
\draw [densely dashed](\rt, \r) circle (\rt cm) ;
\draw [densely dashed](\rt, -\r) circle (\rt cm) ;
\end{scope}

\end{scope} 
\draw (0,0) circle (\r cm) ;
\fill[white] (0, \r) circle (\r*0.03 cm)  ;
\draw (0, \r) circle (\r*0.03 cm) ;
\fill[white] (0, -\r) circle (\r*0.03 cm) ;
\draw (0, -\r) circle (\r*0.03 cm) ;

\draw (0,\r) node [anchor = south] {$S$} ;
\draw (0, -\r) node [anchor=north] {$T$} ;
\draw (\r,0) node [anchor = west] {$E$} ;
\draw (\x,0) node {$*$} ;
\draw (\r,\r) node {$\h^+$} ;

\begin{scope}
 \clip (-2*\r-1, 0) circle (\r cm);
 \fill[lightgray] (-3*\r-1,-\r) rectangle (-2*\r-1, \r);

\draw (-2*\r-1, 0) circle (\r cm);
\draw [densely dashed] (-2*\r-1,-\r) -- (-2*\r-1,\r);

\draw (-\r-1,\r) node {$\h^-$};
\draw (-2*\r-1, 0) node {$\times$} ;
\draw [dotted] (-3*\r-1,0) -- (-2*\r-1,0);
\end{scope}
 \draw (-2*\r-1, 0) circle (\r cm);
\draw (-\r-1,\r) node {$\h^-$} ;
\fill[white] (-2*\r-1, \r) circle (\r*0.03 cm) ;
\draw (-2*\r-1, \r) circle (\r*0.03 cm) ;
\fill[white] (-2*\r-1, -\r) circle (\r*0.03 cm) ;
\draw (-2*\r-1, -\r) circle (\r*0.03 cm) ;
\draw (-2*\r-1,\r) node [anchor = south] {$S$} ;
\draw (-2*\r-1, -\r) node [anchor=north] {$T$} ;
\end{tikzpicture}

\end{center}
\caption{The fundamental domain $V(\bar{\A}^0) \isom \lozenge$ under the map $\bar{Z}: \Stab^0(\D)/\C \to \Pb^1$.  We picture $\Pb^1 = \h^+ \cup \h^-$ as the union of two discs where the imaginary part of the coordinate $\bar{Z}$ is positive and negative respectively.  They are glued along the line $\bar{Z} \in \R$, which is the image of all walls of marginal stability in $\Stab(\D)/ \C$.  The region $\h^-$ where only two objects are stable contains the first wall passing through the image of $\times$.  The region $\h^+$ contains the other two walls of $V(\bar{\A}^0)$ which meet at the image of $*$.  We label points on the boundary by the object whose central charge vanishes there.}
\end{figure}
{
\begin{proof}[Proof of Theorem \ref{stab}] 
Using the identification $V(\A) \isom \lozenge = p(\M)$, we can extend the branch of the period map to a map $f:\widetilde{\M} \to \Stab(\D) / \C$ by equivariance.  We only have to check continuity on the boundary of $\M$, i.e. the action of the monodromy on $H_1(E, \Z)$ on crossing one of the two branch cuts in either direction is identical to the action of the four simple tilts on $K(\D)$.  But these both act by 
\[
\left ( \begin{array}{cc}  0 & -1 \\
                           1 & 0 \\
 \end{array} \right )
\qquad \left ( \begin{array}{cc} 0 & 1 \\
                                -1 & 1\\
 \end{array} \right )
\]
and their inverses.
\end{proof}
}
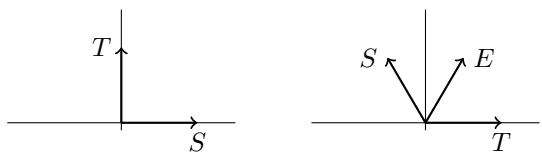
\begin{figure}[h] \label{orb}
\begin{center}
\begin{tikzpicture}
 \draw (-1.5,0) -- (1.5,0);
 \draw (0,-0.1) -- (0,1.5);
 \draw [->,thick](0,0) -- (0,1) node [anchor=east]{$T$};
 \draw [->,thick] (0,0) -- (1,0) node [anchor = north]{$S$};

 \draw (2.5,0) -- (5.5,0);
 \draw (4,-0.1) -- (4,1.5);
 \draw [->,thick](4,0) -- (4.5,0.86) node [anchor=west]{$E$};
 \draw [->,thick](4,0) -- (3.5,0.86) node [anchor=east]{$S$};
 \draw [->,thick] (4,0) -- (5,0) node [anchor = north]{$T$};
\end{tikzpicture}
\end{center}
\caption{The image of the  $\Z/2$-  and $\Z/3$-orbifold points $\times$ and $*$}
\end{figure}

We denote by $L^\times$ the total space of the $\C^*$-bundle of non-zero holomorphic differentials on $\widetilde{M}$.  It is isomorphic to the complement of the discriminant locus in the space $\C^2_{a,b}$ of cubic polynomials $z^3 + az + b$.  The fundamental group of $L^\times$ is isomorphic to the braid group $\Br_3$ as the discriminant locus describes the trefoil knot.
\begin{cor}
 
 There is a biholomorphic map
\begin{center}
\begin{tikzpicture}[node distance=2cm, auto]
  \node (A) {$\widetilde{L^\times}$};
  \node (B) [right of=A] {$\Stab^0(\D)$};
  \node (D) [below of=B] {$\Hom(K(\D),\C)$};
;
  \draw[->] (A) to node {$F$} (B);
  \draw[->] (A) to node [anchor=east] {$(\int_\alpha \lambda, \int_\beta \lambda) \: \:$} (D);
    \draw[->] (B) to node {$(Z(S),Z(T))$} (D);
\end{tikzpicture}
\end{center}
 lifting the periods of the differential $\lambda$.  It is equivariant with respect to the actions of $\Br_3$ on the left by deck transformations and on the right by $\Aut(\D)$.
\end{cor}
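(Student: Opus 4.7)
The plan is to promote the biholomorphism $f:\widetilde{\M} \to \Stab^0(\D)/\C$ of Theorem~\ref{stab} to the desired $F:\widetilde{L^\times} \to \Stab^0(\D)$ by lifting it through the two principal $\C$-bundle structures on either side. On the left, $\widetilde{L^\times} \to \widetilde{\M}$ is the universal cover of the $\C^*$-bundle $L^\times \to \M$, so its $\C$-action is the lift of the scaling action on differentials; on the right, $\Stab^0(\D) \to \Stab^0(\D)/\C$ is a principal $\C$-bundle by the free $\C$-action on stability conditions. The period map on the former and the central charge map on the latter are both $\C$-equivariant holomorphic maps into $\Hom(K(\D),\C)\setminus\{0\}$, and by Theorem~\ref{stab} their projectivizations agree along $f$.

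For $\tilde p \in \widetilde{L^\times}$ projecting to $\tilde\tau \in \widetilde{\M}$, I would define $F(\tilde p)$ to be the unique stability condition in the $\C$-orbit above $f(\tilde\tau) \in \Stab^0(\D)/\C$ whose central charge on the basis $\{[S],[T]\}$ is $(\int_\alpha \lambda_{\tilde p}, \int_\beta \lambda_{\tilde p})$. Existence and uniqueness of this lift rest on the fact that, on each $\C$-fiber, the period map and the central charge map are the same exponential covering $\C \to \C^*$ of the target orbit; once the projectivizations have been matched by $f$, only one translate in the fiber realises the prescribed period. The same construction in reverse (project via $f^{-1}$, then rescale $\lambda$ to match the central charge) produces a two-sided inverse, and holomorphicity of $F$ and $F^{-1}$ follows from the holomorphicity of the period map and of local sections of Bridgeland's central-charge local homeomorphism.

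For $\Br_3$-equivariance, Theorem~\ref{aut} identifies $\Aut^0(\D) \isom \Br_3$, and Theorem~\ref{stab} already establishes equivariance under the quotient $\PSL(2,\Z) = \Br_3/Z(\Br_3)$ between deck transformations on $\widetilde{\M}$ and autoequivalences on $\Stab^0(\D)/\C$. To lift this to full $\Br_3$-equivariance I must check the central generator: on the right, Section~2 shows the centre of $\Aut^0(\D)$ is generated by $[1]$; on the left, the fibration $\C^* \to L^\times \to \M$ realises the central extension $1 \to \Z \to \Br_3 \to \PSL(2,\Z) \to 1$ at the level of fundamental groups, and the central $\Z$ acts on $\widetilde{L^\times}$ by the generator of the deck group of $\C \to \C^*$ along the fiber. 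Both actions translate the $\C$-coordinate by one unit, so the $\C$-equivariance of $F$ then yields $\Br_3$-equivariance. The main obstacle is essentially one of conventions: one must align the two $\C$-bundle structures so that the exponential $\C \to \C^*$ appearing on the differentials side matches the one appearing in Bridgeland's $\C$-action, ensuring that their unit translations---which are the images of the central generator of $\Br_3$---really do correspond under $F$.
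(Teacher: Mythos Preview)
Your proposal follows essentially the same strategy as the paper: lift $f$ to $F$ via the free $\C$-actions on both sides (you are just more explicit in defining $F(\tilde p)$ as the unique point in the $\C$-orbit with the prescribed central charge), and then deduce $\Br_3$-equivariance from the $\PSL(2,\Z)$-equivariance of $f$ together with a check on the centre.

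The one place your write-up differs from the paper is the final step of the equivariance argument. You assert that once the central generator acts by the same unit $\C$-translation on both sides, ``the $\C$-equivariance of $F$ then yields $\Br_3$-equivariance''. This is true but not automatic: knowing that two $\Br_3$-actions agree on the centre and induce the same $\PSL(2,\Z)$-action on the quotient does not formally force them to coincide, since in principle they could differ by a homomorphism $\Br_3 \to \C$ (or $\Z$) measuring the fibrewise discrepancy. The paper closes this gap with the rigidity observation that any such discrepancy factors through a homomorphism $\PSL(2,\Z) \to \Z$, and the only such homomorphism is trivial (as $\PSL(2,\Z)$ is generated by torsion). Your version implicitly needs the same fact (equivalently: a homomorphism $\Br_3 \to \C$ vanishing on the centre vanishes identically, since the centre has nonzero image in $\Br_3^{\mathrm{ab}}\cong\Z$), so you should make this step explicit.
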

\begin{proof}
 We can lift the map $f: \widetilde{\M} \to \Stab^0(\D) / \C$ to the desired $F$ by equivariance with respect to the $\C$-actions on both sides.  It is a bijection as both $\C$-actions are free, and holomorphic as it is locally given by the periods of $\lambda$.  We know that the two braid groups act identically on $K(\D)$ via their maps to $\PSL(2,\Z)$ and so define identical actions on the $\C^*$-bundle $L^\times$.  Also the actions of the central subgroup $\Z \subset \Br_3$ are identical by construction as it acts as $\Z \subset \C$.  But given these data the actions are determined by a group homomorphism $\PSL(2, \Z) \to \Z$ giving a lifting of the $\Br_3$-action on the $\C^*$ bundle $L^\times$  factoring through $\PSL(2, \Z)$ to the universal cover.  As the only such homomorphism is the trivial one the two $\Br_3$ actions are identical. 
\end{proof}

\bibliographystyle{hplain}
\bibliography{end}
\end{document}